\documentclass[12pt]{article}
\usepackage{graphicx}
\usepackage{latexsym}
\usepackage{amsmath}
\usepackage{verbatim}
\usepackage{amssymb}
\usepackage{mathrsfs}
\usepackage{amsmath,amssymb}
\ExecuteOptions{dvips}
\marginparwidth 0pt
\oddsidemargin 1.5 truecm
\evensidemargin 1.5 truecm
\marginparsep 0pt
\topmargin 0pt
\textheight 22.0 truecm
\textwidth 14.5 truecm

\DeclareMathAlphabet      {\mathbf}{OT1}{cmr}{bx}{n}
\DeclareFontFamily{OT1}{pzc}{}
\DeclareFontShape{OT1}{pzc}{m}{it}%
{<-> s * [1.15] pzcmi7t}{}
\DeclareMathAlphabet{\mathpzc}{OT1}{pzc}{m}{it}

\usepackage{ dsfont }
\usepackage{ mathrsfs }
\usepackage{enumitem}
\usepackage{amsbsy }
\usepackage{graphics}

\usepackage{amsthm}
\newtheorem{thm}{Theorem}
\newtheorem{theorem}{Theorem}[section]

\newtheorem{lemma}[theorem]{Lemma}
\newtheorem{corollary}[theorem]{Corollary}

\newtheorem{definition}[theorem]{Definition}

\newtheorem{remark}{Remark}[section]

\usepackage{amscd}
\usepackage{romannum}
\usepackage[toc,page,title,titletoc,header]{appendix}
\usepackage{hyperref}
\usepackage{tikz-cd}
\usepackage[all,cmtip]{xy}
\usetikzlibrary{matrix,calc}

\usepackage{lipsum}

\usepackage{abstract}

\begin{document}    \small
\pagenumbering{arabic}
\title{On   aspherical symplectic fillings  with finite  capacities of the prequantization bundles}
\author{{ Guanheng Chen}}
\date{}
\maketitle
\thispagestyle{empty}
\begin{abstract}    
 A  prequantization bundle is  a  negative circle bundle over a symplectic surface  together with a   contact form induced by a $S^1$-invariant connection.   Given  a symplectically  aspherical  symplectic filling  of a   prequantization bundle  satisfying certain topological  conditions,   suppose that a version of   symplectic capacity of the symplectic  filling is finite. Then, we show that the symplectic filling is diffeomorphic to the associated disk bundle. 
\end{abstract}

\section{Introduction and main result}
Let $(Y, \xi)$ be a contact manifold, where $\xi$ is a contact structure.   A \textbf{strong symplectic filling} of $(Y, \xi)$   is a symplectic manifold $(X, \omega)$ such that  $Y =\partial X$ and $\omega(V, \cdot)$ on $Y$ is a contact form of $\xi$, where $V$ is the Liouville vector field.  
 One of the  fundamental problems  in symplectic geometry is  try to  classify the symplectic fillings of a class of   contact manifolds or at least to find constraints of them. 

Prequantization bundles   are a class of contact manifolds which   have simple and well studied Reeb dynamics.     They are also called the Boothby–Wang bundles in many literatures.  Roughly speaking, a  prequantization bundle is  a circle bundle over a symplectic manifold  together with a contact form induced by a connection.  
When the base surface is the sphere, the prequantization bundle topologically is the lens space $L(p, 1)$.  In this case, the symplectic fillings of $L(p, 1)$ are classified by D. Mcduff \cite{M1}.  When the base manifolds are rational ruled surfaces,   the Stein fillability is studied by  M. Kwon and T. Oba \cite{KO}.   Similar to the case of the prequantization bundles, for a circle bundle over a  surface with $S^1$-invariant contact structure,  if the dividing set is nonempty, then the symplectic fillings are classified by S. Lisi, J. Van Horn-Morris, C.Wendl \cite{LVW1, LVW2}.




In  this note, we    study the case that  symplectic fillings of a prequantization bundle over  a surface with positive genus. We determine the diffeomorphism type of the symplectic fillings under some simple  topological conditions and a finiteness  assumption on a version of symplectic  capacity. 
 In contrast to the results in \cite{LVW1, LVW2}, the dividing set of  a prequantization bundle is empty. 
 
To give the statement of our main result, let us quick review the definition of   prequantization bundles. 
Let $(\Sigma, \omega_{\Sigma})$ be a closed symplectic manifold. Assume that $[\omega_{\Sigma}]\in H^2(\Sigma, \mathbb{R}) \cap H^2(\Sigma, \mathbb{Z})$ is integral.  Then we have a  complex   line bundle $\pi_E: E \to \Sigma$ such that  $c_1(E) =-[\omega_{\Sigma}]$.  Fix a Hermitian metric. Let $\pi: Y:=\{ r =1\} \to \Sigma$ be the  unit circle subbundle of $E$, where $r$ is the radius coordinate.   Let   $A$  be  a Hermitian connection 1-form    such that $\frac{i}{2\pi} F_{A} = -\omega_{\Sigma}$, where $ F_{A} $ is the curvature of $A$.  Then we have   a global angular form $\lambda \in \Omega^1(E -\Sigma, \mathbb{R})$, which is defined by  $\lambda : =\frac{1}{2\pi} (d\theta  - i A  \vert_U)$ under a unitary  trivialization $U \times \mathbb{C}$,  where  $d \theta$ is the angular form of $\mathbb{C}$ and $ A  \vert_U$ is a $i\mathbb{R}$ valued $1$-form.  $\lambda$ is well defined globally. We abuse the notation $\lambda$ to denote its restriction on $Y$.    The pair $(Y, \lambda)$ is called a  \textbf{prequantization bundle}.   Let    $\mathbb{D}E:=\{ r \le 1\}$  denote  the associated disk bundle. For our purpose, we assume $\Sigma$ is a surface with positive genus throughout. 

The main result of the note is as follows: 
\begin{thm} \label{thm0}
Let $(Y, \lambda)$ be the prequantization bundle over a  surface $\Sigma$ with genus $g(\Sigma) \ge 1$. Let  $\xi:=\ker \lambda$ denote the contact structure and   $e \le -1$ denote the Euler number of $\pi: Y \to \Sigma$. Let  $(X, \omega)$ be a  strong symplectic filling of $(Y, \xi)$. Assume that  $X$ satisfies the following topological constraints: 
\begin{enumerate} [label=\textbf{T.\arabic*}]
 \item \label{T1}
The fibers    of $Y$   are   contractible   in $X$. 
\item \label{T2}
$X$ is symplectically aspherical, i.e., $\omega \vert_{\pi_2(X)}=c_1(TX) \vert_{\pi_2(X)}=0$. 
\item   \label{T4}
The intersection form $Q_X: H_2(X, \mathbb{Z}) \times H_2(X, \mathbb{Z}) \to \mathbb{Z}$ is  negative definite, ie., $A \cdot A \le 0$ for any  class $A \in H_2(X, \mathbb{Z})$.  
\end{enumerate}
 Suppose that the symplectic capacity  $c_1(X, \omega) < \infty$ (see Definition \ref{def1}).  
 Then $X$ is diffeomorphic to the associated disk bundle $\mathbb{D}E$ over $\Sigma$. 
\end{thm}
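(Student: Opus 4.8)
The plan is to construct, in the symplectic completion of $X$, a foliation by finite-energy $\hat J$-holomorphic planes asymptotic to the fibres of $Y$; such a foliation exhibits $X$ as a disk bundle over $\Sigma$, and the fact that its boundary is $(Y,\xi)$ pins the Euler number down to $e$, forcing $X\cong\mathbb{D}E$. This is in the spirit of the holomorphic-foliation method used for fillings of lens spaces and of circle bundles with nonempty dividing set (cf. the works cited in the introduction).

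\emph{Step 1: completion and the seed curve.} Attach the positive cylindrical end and set $\hat X = X\cup_Y(Y\times[1,\infty))$, and choose a generic $\hat\omega$-compatible almost complex structure $\hat J$ which on the end is cylindrical, adapted to $\lambda$, and $S^1$-invariant, so that $Y$ is a Morse--Bott family of Reeb orbits parametrized by $\Sigma$ and the simple fibres are the orbits of smallest period. The hypothesis $c_1(X,\omega)<\infty$ of Definition~\ref{def1}, after the by-now-standard translation between periodic Hamiltonian orbits (or spectral invariants) and punctured pseudoholomorphic curves — neck stretching together with SFT compactness — produces a nonconstant finite-energy $\hat J$-holomorphic plane $u_0$ in $\hat X$ that is positively asymptotic to a closed Reeb orbit; the action bound built into $c_1$ forces that orbit to be a simple fibre $\gamma_0$, and $u_0$ is somewhere injective. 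Here \ref{T1} is exactly what makes a plane asymptotic to a fibre topologically admissible, while \ref{T2} prevents $u_0$ from being a multiply covered or bubbled configuration.

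\emph{Step 2: the moduli space.} Let $\mathcal M$ be the connected component, containing $u_0$, of the moduli space of somewhere-injective $\hat J$-holomorphic planes in $\hat X$ asymptotic to a simple fibre. A Fredholm index count in the Morse--Bott setting gives $\dim\mathcal M=2$, and automatic transversality for immersed index-$2$ genus-$0$ curves in a four-manifold (Hofer--Lizan--Sikorav, and Wendl in the punctured case) shows that $\mathcal M$ is a smooth surface whose elements are embedded. Since any two elements of $\mathcal M$ are homologous and their asymptotic fibres are disjoint, positivity of intersections forces distinct elements of $\mathcal M$ to be pairwise disjoint, each meeting the end in a single trivial cylinder over a fibre.

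\emph{Step 3: compactness and the foliation.} By Gromov--SFT compactness in its Morse--Bott form, a sequence in $\mathcal M$ converges to a holomorphic building; asphericity \ref{T2} rules out sphere bubbles, minimality of the fibre period rules out breaking along a shorter orbit, and negative-definiteness \ref{T4} rules out the appearance of exceptional components, while breaking off a trivial cylinder over the limiting fibre merely records a point of the Morse--Bott family. Hence the limit lies in the natural compactification $\overline{\mathcal M}$, and the union of the images of the curves of $\mathcal M$ is both open (implicit function theorem) and closed (compactness) in $\hat X$, hence equals $\hat X$: the planes of $\mathcal M$ foliate $\hat X$. The map sending a plane to its asymptotic fibre, $\mathrm{ev}_\infty:\mathcal M\to Y/S^1=\Sigma$, is a local diffeomorphism, surjective by the open--closed argument and injective by positivity of intersections, hence a diffeomorphism $\mathcal M\cong\Sigma$.

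\emph{Step 4: conclusion.} The foliation presents $\hat X$ as the total space of a smooth oriented rank-two real bundle over $\mathcal M\cong\Sigma$, agreeing near the end with the model foliation of $\widehat{\mathbb{D}E}$ by its completed disk fibres; cutting along $Y=\partial X$ identifies $X$ with a disk bundle over $\Sigma$, and since $\partial X=Y$ has Euler number $e$, this disk bundle must be $\mathbb{D}E$. I expect the main obstacle to be Step~3: controlling all possible breakings and bubblings of the planes in the Morse--Bott situation and then propagating the foliation across the whole of $\hat X$ — each of \ref{T1}, \ref{T2}, \ref{T4} is consumed precisely there, and the Morse--Bott transversality and compactness statements require care.
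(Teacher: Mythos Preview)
Your overall architecture---produce a seed plane from finiteness of $c_1(X,\omega)$, then show the connected moduli space of such planes foliates $\hat X$ and is diffeomorphic to $\Sigma$---is exactly the paper's. But Step~1 glosses over the substantive part of the argument, and you mislocate where \ref{T4} is consumed.

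The assertion that ``the action bound built into $c_1$ forces that orbit to be a simple fibre'' is not correct. What $c_1(X,\omega)<\infty$ gives is a class of SFT grading $2$ on which $\epsilon_X\langle p\rangle$ is nonzero; this is a \emph{grading} constraint, not an action constraint that singles out simple fibres. After the Morse--Bott perturbation $\lambda_\varepsilon=(1+\varepsilon\pi^*H)\lambda$, the Conley--Zehnder indices of $e_\pm^k,h_i^k$ depend on an a priori unknown integer $c_\tau(\sigma^*TX)$ coming from the capping disk in $X$. The paper needs three lemmas to show that finiteness of $c_1$ forces $CZ(e_-)=1$ (separately excluding $CZ(e_-)\le -1$, $\ge 5$, and $=3$, the last by a cobordism argument moving the point constraint to the boundary), and a further lemma to show $\epsilon_X\langle p\rangle(e_-^2)=0$, before concluding that the seed plane is asymptotic to the simple orbit $e_+$. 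Without this analysis you do not know $c_\tau(u_0^*TX)=1$, so the index computation in Step~2 is unjustified.

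You also misplace \ref{T4}. It plays no role in compactness: \ref{T2} kills sphere bubbles, and minimality of action together with the degree constraint $M=N$ on cylindrical levels kills all breaking. Negative definiteness is used instead to prove \emph{embeddedness} of the seed plane, via the relative adjunction formula: writing $|e|[u]=Z+A$ with $Z$ a chain in $Y$ and $A\in H_2(X;\mathbb Z)$, one gets $|e|^2 Q_\tau(u)=|e|+A\cdot A\le |e|$ by \ref{T4}, hence $Q_\tau(u)\le 1/|e|$ and $\delta(u)=0$. Without this you have no reason to believe the planes are embedded, and the automatic transversality and positivity-of-intersections arguments in Steps~2--3 do not get off the ground.
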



The symplectic capacity $c_1(X, \omega)$ in Theorem \ref{thm0}  in fact is one of the higher symplectic capacities  $\mathfrak{g}_1^{\le 1}(X, \omega)$  defined by   K. Siegel \cite{KS}.   It should be equivalent  to the first Gutt-Hutchings capacity in \cite{GH}.  We   explain it a little more   in Remark \ref{remark2}.

\ref{T4} seems to be a strong assumption, but by a result of T.-J. Li, C. Y. Mak,   K. Yasui \cite{LMY},  if the Euler number  $e$ is small enough, then the \ref{T4} is true.  Hence, we have the following corollary. 
\begin{corollary}
 Suppose that $|e|+ \chi(\Sigma)>0$. Let  $(X, \omega)$ be a symplectic filling of $(Y, \xi)$ satisfying \ref{T1} and \ref{T2}.  If $c_1(X, \omega) < \infty$,   
 then $X$ is diffeomorphic to the associated disk bundle $\mathbb{D}E$ over $\Sigma$. 
 \end{corollary}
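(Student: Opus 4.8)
The plan is to study $J$--holomorphic planes in the symplectic completion $\widehat{X} := X \cup_Y \big(Y\times[1,\infty)\big)$ for a generic compatible cylindrical $J$, show that they form a foliation of $\widehat{X}$ by curves asymptotic to the fibers of $Y\to\Sigma$, and then read off the diffeomorphism type. The Reeb flow of a prequantization bundle is Morse--Bott: every circle fiber is a simple closed Reeb orbit, all of the same action, and the only other Reeb orbits are their iterates; I would work directly in this Morse--Bott framework (or with a small Morse-function perturbation along $\Sigma$), keeping track of Conley--Zehnder data in the trivialization $\tau_0$ that extends over the disk fibers of $\mathbb{D}E$.

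First I would extract the curve from the hypothesis. By definition of $c_1(X,\omega)$ (Definition~\ref{def1}) together with a standard neck-stretching/compactness argument, finiteness of $c_1(X,\omega)$ yields, for generic $J$ and a generic point $p\in X$, a nonconstant finite-energy $J$--holomorphic plane $u$ in $\widehat X$ passing through $p$, asymptotic to a Reeb orbit $\gamma$ of controlled Conley--Zehnder index and with $\omega$--energy bounded by $c_1(X,\omega)$. Since every Reeb orbit is a (multiple cover of a) fiber, \ref{T1} forces $\gamma$ to be a cover of a fiber and $[u]$ to be the relative homology class of a disk fiber; \ref{T2} makes the Fredholm index of $u$ well defined (independent of sphere attachments), and the index/energy bookkeeping for the iterates of the simple fiber then pins $\gamma$ down to the simple fiber $\gamma_0$. (The assumption $g(\Sigma)\ge 1$ is what makes $\mathbb{D}E$ itself symplectically aspherical, so this conclusion is consistent with the target.) Being a plane through a generic point in a four-manifold, $u$ is somewhere injective, and by the adjunction inequality, positivity of intersections, and \ref{T4}, I expect $u$ to be embedded with the fiber-disk self-intersection number $e$, and to lie in a $2$--dimensional moduli space (a $\Sigma$--family in the Morse--Bott picture), distinct leaves being disjoint since fiber disks over distinct points have relative intersection number zero.

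Next I would upgrade this to a foliation. Wendl's automatic transversality should make the relevant moduli space $\mathcal M$ a smooth $2$--manifold, and the crucial point is properness of the evaluation map $\mathcal M\times\mathbb{C}\to\widehat X$: by \ref{T2} no holomorphic sphere can bubble off, and by \ref{T4} (via adjunction) no closed holomorphic component with non-negative self-intersection can appear in an SFT limit, so every limit building must consist of a single fiber plane together with trivial cylinders over $\gamma_0$ --- a harmless degeneration that does not disturb the foliation. Hence the planes foliate $\widehat X$; restricting to $X$ gives a smooth foliation by disks whose boundary circles are exactly the fibers of $Y\to\Sigma$. The leaf space is then a closed surface canonically identified with $\Sigma$ (the foliation restricted to $\partial X=Y$ is the given circle fibration), so $X\to\Sigma$ is a smooth $\mathbb D^2$--bundle; since its boundary circle bundle is $Y\to\Sigma$, with Euler number $e$, I conclude that $X$ is diffeomorphic to $\mathbb{D}E$.

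The main obstacle is the middle: controlling the SFT/Gromov limits of the point-constrained family of planes and excluding both multiply covered limits and bubbling. This is exactly where \ref{T2} (to kill sphere bubbles and to fix the Fredholm index) and \ref{T4} (to forbid closed holomorphic components of non-negative square through adjunction) have to do the real work; once one knows that every limit building is a single fiber plane plus trivial cylinders, the existence of the foliation and the identification of the diffeomorphism type follow from the standard four-dimensional arguments.
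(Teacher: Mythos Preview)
Your proposal has a genuine gap: you repeatedly invoke \ref{T4} (negative definiteness of $Q_X$), but \ref{T4} is \emph{not} among the hypotheses of the Corollary. The Corollary replaces \ref{T4} by the numerical condition $|e|+\chi(\Sigma)>0$, and nowhere in your argument do you use that condition. So as written, your proof establishes only (a sketch of) Theorem~\ref{thm0} itself, not the Corollary.

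The paper's proof of the Corollary is of an entirely different nature: it is a short reduction to Theorem~\ref{thm0}. One glues on the symplectic cap given by the unit disk bundle of the dual line bundle $E^*\to\Sigma$; the zero section $\Sigma\subset E^*$ is a symplectic surface with $[\Sigma]\cdot[\Sigma]=|e|>-\chi(\Sigma)$, so this cap is an \emph{adjunction cap} in the sense of Li--Mak--Yasui \cite{LMY}. Their Proposition~2.26 then forces $b_2^+(X)=0$, i.e.\ \ref{T4} holds, and Theorem~\ref{thm0} applies. If you want to salvage your direct approach, you must first supply exactly this step (or some other argument deriving \ref{T4} from $|e|+\chi(\Sigma)>0$); after that, the holomorphic-foliation strategy you outline is indeed the content of the proof of Theorem~\ref{thm0}, and the paper carries it out in Section~3.
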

 \begin{proof}
Let $\pi^*:  E^* \to \Sigma $  be the dual  line bundle  of $E.$ Thus, $c_1(E^*)=-c_1(E)=[\omega_{\Sigma}]$ and $<c_1(E^*), \Sigma>=|e|$. We can endow $E^*$ with a symplectic  form  $\Omega_{E^*}$ such that the unit disk subbundle $(\mathbb{D} E^*, \Omega_{E^*})$ is a symplectic cap  of $(Y, \lambda)$. For the details of the construction, we refer the reader to  \cite{KO}.  The zero section in $E^*$, still denoted by $\Sigma$, is an embedded surface such that $[\Sigma] \cdot [\Sigma] =<c_1(E^*), \Sigma>=|e|>-\chi(\Sigma)$.  Therefore, $(\mathbb{D} E^*, \Omega_{E^*})$  is an adjunction cap in the sense of \cite{LMY}.  By Proposition 2.26 of \cite{LMY}, $b_2^+(X) =0$, i.e., $Q_X$ is negative definite.   Then the statement is a consequence of Theorem \ref{thm0}.
 \end{proof}
 
 The  idea  of the proof is to construct certain holomorphic foliation  on  $(X, \omega)$ and identify $X$ with certain moduli space of holomorphic curves. Then  we use this to show that $X$ has a bundle structure.   The methods here comes  from   D. Mcduff when she classifies the   symplectic ruled 4-manifolds \cite{M1}, and later developed by C. Wendl  to study symplectic fillings \cite{Wen3, LVW1, LVW2}.  Comparing with  \cite{Wen3, LVW1, LVW2},  the case here is simpler because the fibration structure does not have singularities. 
 
  \begin{remark}
 Our results only for four dimension, because the proof of Theorem \ref{thm0}  relies on the intersection theory of holomorphic curves which only works for four dimension. Lemmas \ref{lem1},\ref{lem2},\ref{lem7} should be generalized  to higher dimension by suitable modifying their proof. 
\end{remark}
 
 \begin{remark}
In many literatures, symplectically aspherical means $\omega\vert_{\pi_2(X)} =0$. Our definition in \ref{T2} is stronger than the usual one. We  need $c_1(TX)\vert_{\pi_2(X)} =0$ to define a $\mathbb{Z}$-grading on the linearization contact homology. 
\end{remark}

\section{Preliminaries}
 Let $\lambda$ be a contact form on $Y^{2n-1}$ such that $\lambda \wedge (d\lambda)^{n-1}>0$.  The \textbf{Reeb vector field} $R$ of $(Y, \lambda)$ is characterized by  conditions $\lambda(R)=1$ and $d\lambda(R, \cdot)=0$.
A \textbf{Reeb orbit} is a smooth map $\gamma: \mathbb{R}_{\tau} / T \mathbb{Z} \to Y $ satisfying the ODE $\partial_{\tau} \gamma =R \circ \gamma$ for some $T>0$.  The \textbf{contact action} of $\gamma$  is defined by
\begin{equation*}
\mathcal{A}_{\lambda}(\gamma) :=T= \int_{\gamma} \lambda.
\end{equation*}
A Reeb orbit is called \textbf{nondegenerate} if $1$ is not an eigenvalue of the linearized Reeb flow.  
Given $L \in \mathbb{R}$, the contact form $\lambda$ is called $L$-\textbf{nondegenerate} if all Reeb orbits with action  less than $L$ are nondegenerate. In three dimension,   each  nondegenerate   Reeb orbit  is classified as one of the following three types: elliptic,  positive hyperbolic  and negative hyperbolic.

\subsection{Rational holomorphic curves} 
Let $({X},  {\omega})$ be a symplectic cobordism with convex and concave boundaries.  
Let $(\widehat{X}, \hat{\omega})$ denote  the symplectic completion of $(X, \omega)$ by adding cylindrical ends. An almost complex structure $J$ on $\mathbb{R} \times Y$ is called \textbf{admissible} if $J$ is $\mathbb{R}$-invariant, $J(\partial_s) =R$ and $J \vert_{\xi}$ is $d\lambda$-compatible.   An almost complex structure $J$ on $\widehat{X}$ is called  \textbf{cobordism admissible} if $J$ is admissible on the cylindrical ends  and $J \vert_{X}$ is $\omega$-compatible. 

Fix an admissible complex structure $J$. A \textbf{rational holomorphic curve}  is a map $u$ from a punctured  Riemann sphere  $(\mathbb{S}^2-\Gamma, j)$ to   $\mathbb{R} \times Y$  or $\widehat{X}$  satisfying the Cauchy-Riemann equation $\bar{\partial}_{J, j} u=0$.   Define a piecewise 2-form $\check{\omega}$ by 
\begin{equation*}
\check{\omega}: = d\lambda_{+} \vert_{\mathbb{R}_{\ge0} \times Y_+} + \omega \vert_X + d\lambda_{-} \vert_{\mathbb{R}_{\le 0} \times Y_-}. 
\end{equation*}
The \textbf{energy} of a holomorphic curve is $\int \mathbf{u}^*\check{\omega}.$ It is easy to check that the energy of a holomorphic curve is nonnegative.

Given Reeb orbits $\gamma_1^+, ..., \gamma_{s_+}^+$ and   $\gamma_1^-, ...,\gamma_{s_-}^-$, let $ \mathcal{M}_{Y, m}^J(\gamma^+_1,..,\gamma_{s_+}^+;  \gamma^-_1,..,\gamma_{s_-}^-)$ denote  the moduli space of rational holomorphic curves in $\mathbb{R} \times Y$ with  positive ends at $\gamma^+_1,...,\gamma_{s_+}^+$ and negative ends at  $\gamma^-_1,...,\gamma_{s_-}^-$, and $m$ is the number of the marked points.  The moduli  space of rational  holomorphic  curves in $\widehat{X}$ is defined similarily, denoted by $\mathcal{M}_{X, m}^J(\gamma^+_1,..,\gamma^+_{s_+}; \gamma^-_1,..,\gamma^-_{s_-})$.  If $m=0$, we omit the subscript  $m$.  
Let $\overline{\mathcal{M}_{Y, m}^J}(\gamma^+_1,..,\gamma^+_{s_+}; \gamma_1^{-1},..., \gamma^-_{s_-})$  and $\overline{\mathcal{M}_{X, m}^J}(\gamma^+_1,..,\gamma^+_{s_+}; \gamma_1^{-},..., \gamma^-_{s_-})$  denote the compactification of the moduli space of holomorphic curves in the sense of  SFT \cite{BEHWZ}.  The virtual dimension of the moduli space is 
\begin{equation*}
\begin{split}
  dim \overline{\mathcal{M}_{X, m}^J}(\gamma^+_1,..,\gamma^+_{s_+}; \gamma^-_1,..,\gamma^-_{s_-}) = (n-3)\chi(u)+  2c_{\tau}(u^*TX) +  \sum_{i=1}^{s_+} \mu_{\tau}(\gamma_i^+) - \sum_{j=1}^{s_-} \mu_{\tau}(\gamma_j^- ) +2m,\\
\end{split}
\end{equation*}
where   $\chi(u)$ is the Euler number of the domain, $c_{\tau}(u^*TX)$ is the relative Chern number and   $\mu_{\tau}$ is the Conley-Zehnder index (see \cite{H2} for their definition).   

\begin{remark}
To take the SFT compactification, usually  one needs to fix a relative homology class $Z \in H_2(X,\gamma^+_1,...,\gamma^+_{s_+}; \gamma^-_1,...,\gamma^-_{s_-})$; otherwise, the energy of the holomorphic curves may not be bounded. Here  $H_2(X,\gamma^+_1,...,\gamma^+_{s_+}; \gamma^-_1,...,\gamma^-_{s_-})$ is the  set of 2-chains  in $X$ such that $\partial Z= \sum_{i=1}^{s_+} \gamma_i^+- \sum_{j=1}^{s_-} \gamma_i^-$, modulo boundary of 3-chains.   

Since we assume that $X$ is symplectically aspherical and the Reeb orbits are contractible in $X$, the energy of the rational  holomorphic curves only depend on $\gamma^+_1,...,\gamma^-_{s_-}$ (see the proof of Lemma \ref{lem10}).   So the SFT compactification makes sense for the whole moduli space. 
\end{remark}

Let $ \overline{\mathcal{M}^J}$ denote the  compactification of the moduli space of rational curves (possibly with point constraints).  In general,  $ \overline{\mathcal{M}^J}$  is  not a manifold or an orbifold. To define the linearized contact homology latter, we need  a notion of count of these moduli spaces.    This goal can  be achieved by    several  approaches, either polyfold approaches by  H. Hofer, K. Wysocki, E. Zehnder \cite{HWZ},   or the virtual fundamental cycles by J. Pardon \cite{P1, P2},  or Kuranishi approaches by S. Ishikawa \cite{I}. 
 
  Assume that  there exists  a virtual perturbation scheme  (using one of the methods mentioned above) so that we can assign a virtual count  $\#^{vir} \overline{\mathcal{M}^J} \in \mathbb{Q}$ to the  moduli space.   Moreover,  the virtual count satisfies the following axioms:
\begin{enumerate}
\item
$\#^{vir} \overline{\mathcal{M}^J} =0$ unless $dim  \overline{\mathcal{M}^J}=0$. 
\item 
If $\mathcal{M}^J $ is  compact and transversely cut out, then  $\#^{vir} \overline{\mathcal{M}^J} = \# \mathcal{M}^J.$ Therefore, $ \#^{vir} \overline{\mathcal{M}^J} =0$ if it is empty. 
\item
If $dim \overline{\mathcal{M}^J}  =1$, then $\#^{vir} \partial \overline{\mathcal{M}^J} =0$ . 
\end{enumerate}

\subsection{Linearized contact homology}  The linearized contact homology is introduced by  F. Bourgeois and A. Oancea which is an alternative model of the  positive $S^1$-equivariant symplectic Floer homology \cite{BO1}.  
 
 Let $(Y, \lambda)$   be a contact manifold  such that $\lambda$   is nondegenerate. Also, we assume that $(X, \omega)$ is a symplectically aspherical filling such that $\lambda =\omega(V, \cdot)$ over $Y$, where $V$ is the Liouville vector field. 
  Let $\mathcal{P}_c(Y, \lambda)$ denote the Reeb orbits which are contractible   in $X$.     
A  Reeb orbit $\gamma$ is \textbf{bad} if it is an $k$-fold cover of a simple orbit $\gamma'$ such that $k$ is even and $\mu_{\tau}(\gamma) -\mu_{\tau}(\gamma')$ is odd.   Reeb orbits that are not bad are called \textbf{good}.

Let $C_*(X, \omega)$ be the    $\mathbb{Q}$-module  generated freely  by the good  orbits in  $\mathcal{P}_c(Y, \lambda)$.  
Define an augmentation $\epsilon_X:  C_*(X, \omega) \to \mathbb{Q}$ by 
\begin{equation}
\epsilon_X(\gamma): =\#^{vir} \overline{\mathcal{M}_X^J}(\gamma; \emptyset),
\end{equation}
where $\overline{\mathcal{M}_X^J}(\gamma; \emptyset)$ is the moduli space of holomorphic planes   with a positive end at $\gamma$. The differential on $C_*(X, \omega)$ is defined by 
\begin{equation}
\partial \gamma_+: = \sum_{\gamma_-} \sum_{\gamma_1,...,\gamma_k}\#^{vir} \left( \overline{\mathcal{M}_Y^J}(\gamma_+;  \gamma_-, \gamma_1,..., \gamma_k) /\mathbb{R} \right) \epsilon_X(\gamma_1)...\epsilon_X(\gamma_k) \gamma_-. 
\end{equation}
The linearized contact homology $CH(X, \omega)$ is the homology of $(C_*(X, \omega), \partial)$.

\paragraph{Grading and action filtration}
By the symplectically aspherical assumption \ref{T2}, we  can define a  $\mathbb{Z}$-grading and an   action  filtration on $CH(X, \omega)$. 

 Let $\gamma \in  \mathcal{P}_c(Y, \lambda)$ be a Reeb orbit. Since $\gamma$ is contractible  in $X$, we can find a disk $\sigma: \mathbb{D} \to X$ such that $\sigma(\partial \mathbb{D}) =\gamma$.    For any trivialization $ \tau$ of $\gamma^*\xi$, we  extend it to $\sigma^*TX $.  Define  
$$CZ(\gamma) : = 2c_{\tau}(\sigma^*TX)+ \mu_{\tau}(\gamma). $$
Note that $CZ(\gamma)$ is independent of the choice of $\tau$.  The \textbf{SFT grading} of a 
contractible     Reeb orbit is  $|\gamma|:=(n-3)+CZ(\gamma).$
We also define the  \textbf{symplectic action} of $\gamma$ by 
$$A_{\omega}(\gamma):=\int \sigma^*\omega.$$
Because $\omega \vert_{\pi_2(X)}=c_1(TX) \vert_{\pi_2(X)}=0$
,  $CZ(\gamma)$ and $A_{\omega}(\gamma)$ are  independent of the choice of  the disk $\sigma$.  
\begin{remark}
Unlike  the contact action $\mathcal{A}_{\lambda}$ of Reeb orbits, the symplectic action $A_{\omega}(\gamma)$ here is not necessarily nonnegative. Unless the symplectic filling is exact,     in general we may have $\mathcal{A}_{\lambda}(\gamma) \ne A_{\omega}(\gamma)$. 
\end{remark}
 
 Let $C_*^L(X, \omega)$ be the chain complex generated by the good   orbits  in $\mathcal{P}_c(Y, \lambda)$ with symplectic action  $A_{\omega}$ less than $L$. 
\begin{lemma} \label{lem10}
$C_*^L(X, \omega)$ is a subcomplex. 
\end{lemma}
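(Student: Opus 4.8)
The plan is to show that the differential $\partial$ on $C_*(X,\omega)$ does not increase the symplectic action $A_\omega$, so that the span of generators with $A_\omega < L$ is closed under $\partial$. Concretely, I must check two things: that the augmentation $\epsilon_X$ only "sees" holomorphic planes of nonnegative symplectic energy, and that each term in $\partial\gamma_+$ involves a cylinder (or more precisely a curve in $\mathbb{R}\times Y$ with one positive end $\gamma_+$ and negative ends $\gamma_-,\gamma_1,\dots,\gamma_k$) whose positive asymptotic orbit has action at least the sum of the actions of the negative ends that "remain" after the augmentation planes are capped off.

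First I would recall the setup: a term in $\partial\gamma_+\gamma_-$ is weighted by $\#^{vir}\bigl(\overline{\mathcal{M}_Y^J}(\gamma_+;\gamma_-,\gamma_1,\dots,\gamma_k)/\mathbb{R}\bigr)\,\epsilon_X(\gamma_1)\cdots\epsilon_X(\gamma_k)$. If this is nonzero, then by the axioms of the virtual count there exists (after perturbation, a configuration in) the compactified moduli space: a genus-zero building $u$ in $\mathbb{R}\times Y$ from $\gamma_+$ to $\{\gamma_-,\gamma_1,\dots,\gamma_k\}$, and for each $j$ a holomorphic plane $v_j$ in $\widehat X$ asymptotic to $\gamma_j$. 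Gluing the $v_j$'s onto the corresponding negative ends of $u$ produces a (broken) holomorphic plane $w$ in $\widehat X$ asymptotic to $\gamma_+$ at its single positive end and to $\gamma_-$ at... no: $w$ has positive end $\gamma_+$ and negative end $\gamma_-$ only if $k$ accounts for all but one negative end. Rather, the cleanest route: consider the punctured sphere obtained by capping \emph{all} the $\gamma_j$ ends of $u$ with the planes $v_j$; this is a holomorphic building in $\widehat X$ with positive end $\gamma_+$ and exactly one negative end $\gamma_-$. Its total energy $\int w^*\check\omega \ge 0$. Now I would compute this energy using Stokes' theorem and the asphericity hypothesis \ref{T2}: because $\omega\vert_{\pi_2(X)}=0$ and the orbits are contractible in $X$ (hypothesis \ref{T1}), the symplectic action $A_\omega(\gamma)$ is well-defined via any capping disk, and the energy of a curve from $\gamma_+$ to $\gamma_-$ in $\widehat X$ equals $A_\omega(\gamma_+) - A_\omega(\gamma_-)$ (cap off with the two disks to get a sphere, on which $\omega$ integrates to zero). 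Since the energy is $\ge 0$, we get $A_\omega(\gamma_-) \le A_\omega(\gamma_+)$. A parallel, simpler computation shows $\epsilon_X(\gamma)\ne 0$ forces the existence of a plane in $\widehat X$ asymptotic to $\gamma$, whose energy is $A_\omega(\gamma) \ge 0$; this isn't needed for the filtration statement but confirms the grading/action conventions are consistent.

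Therefore, whenever the coefficient of $\gamma_-$ in $\partial\gamma_+$ is nonzero, we have $A_\omega(\gamma_-)\le A_\omega(\gamma_+)$; in fact the inequality is strict unless the building is trivial, because a nonconstant curve in $\mathbb{R}\times Y$ or $\widehat X$ has strictly positive $\check\omega$-energy. Hence if $\gamma_+$ has $A_\omega(\gamma_+) < L$, every generator appearing in $\partial\gamma_+$ also has symplectic action $< L$, so $\partial$ restricts to $C_*^L(X,\omega)$, proving it is a subcomplex.

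The main obstacle is the energy bookkeeping across the SFT building: I must be careful that gluing the augmentation planes onto $u$ genuinely yields a connected genus-zero building with the single positive puncture $\gamma_+$ and single negative puncture $\gamma_-$, and that the additivity of $\check\omega$-energy over the levels of the building, combined with the topological identity $A_\omega(\gamma_+) - A_\omega(\gamma_-) = \sum(\text{level energies}) \ge 0$, holds with the right signs — this is exactly the place where hypotheses \ref{T1} and \ref{T2} are used, and where a naive argument might instead produce the contact action $\mathcal{A}_\lambda$, which (as the preceding remark warns) need not agree with $A_\omega$ for a non-exact filling.
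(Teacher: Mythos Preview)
Your proposal is correct and follows essentially the same approach as the paper: both arguments cap the curve(s) with disks to form a spherical class, invoke $\omega|_{\pi_2(X)}=0$ to identify the $\check\omega$-energy with $A_\omega(\gamma_+)-A_\omega(\gamma_-)-\sum A_\omega(\gamma_i)$, and use nonnegativity of the energy of the curve $u$ in $\mathbb{R}\times Y$ together with $A_\omega(\gamma_i)=\int v_i^*\check\omega\ge 0$ for the augmentation planes. The only cosmetic difference is that the paper keeps the symplectization curve $u$ and the planes $v_j$ separate and sums their energies, whereas you glue first and then compute---your glued object (a symplectization level with one uncapped negative end $\gamma_-$ sitting over cobordism-level planes) is not literally an SFT building in $\widehat X$, so the paper's bookkeeping is slightly cleaner, but the content is identical.
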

\begin{proof}

Suppose that $<\partial \gamma_+, \gamma_-> \ne 0$. Then we have a   broken holomorphic curve $\mathbf{u} \in   \overline{\mathcal{M}_Y^J}(\gamma_+; \gamma_-, \gamma_1,...\gamma_k)$    in $\mathbb{R} \times Y$ and holomorphic planes  $\mathbf{u}_i \in \overline{\mathcal{M}_X^J}(\gamma_i, \emptyset)$ in $\widehat{X}$.  

Let $\sigma_{\pm}$ and $\{\sigma_i\}_{i=1}^k$ be the disks in $X$  bounded by $\gamma_{\pm}$ and $\{\gamma_i\}_{i=1}^k$  respectively.  Then $(-\sigma_+) \# \mathbf{u} \# \sigma_- \#_{i=1}^k\sigma_i$ defines  a spherical class in $\pi_2(X)$.  By the symplectically aspherical assumption, we have 
\begin{equation*}
\begin{split}
\int {\mathbf{u}}^*d\lambda=\int {\mathbf{u}}^*\check{\omega} &=\int \sigma_+^*\omega- \int \sigma_-^*\omega - \sum_{i=1}^k  \int \sigma_i^* \omega + \int \left((-\sigma_+) \# \mathbf{u} \# \sigma_- \#_{i=1}^k  \sigma_i \right)^*\omega\\
&=A_{\omega}(\gamma_+) -A_{\omega}(\gamma_-) -\sum_{i=1}^k A_{\omega}(\gamma_i). 
\end{split}
\end{equation*}
By definition and Stokes' theorem,  we have $
A_{\omega}(\gamma_i) = \int {\mathbf{u}_i^*}\check{\omega}. 
$
Because the energy of the holomorphic curves  are  nonnegative, we have
$$A_{\omega}(\gamma_i) \ge 0 \mbox{ and } A_{\omega}(\gamma_+) \ge A_{\omega}(\gamma_-) +\sum_{i=1}^k A_{\omega}(\gamma_i).$$  
In particular,  we have $A_{\omega}(\gamma_+) \ge A_{\omega}(\gamma_-)$. Therefore,  $C_*^L(X, \omega)$ is a subcomplex. 
\end{proof}

 \paragraph{Symplectic capacities}

Define $\epsilon_X\langle p \rangle: C_*(X, \omega) \to  \mathbb{Q}$  by 
\begin{equation}
\epsilon_X\langle p \rangle (\gamma) : =  \#^{vir} \overline{\mathcal{M}_X^J}(\gamma; \emptyset)\langle p \rangle,
\end{equation}
where $   \overline{\mathcal{M}_X^J}(\gamma; \emptyset)\langle p \rangle$ is the moduli space of holomorphic planes passing through $p$. 
 $\epsilon_X\langle p \rangle$ is a chain map, and hence it descends to $CH_*(X, \omega)$, still denoted by $\epsilon_X\langle p \rangle$. We  decompose the complex $C_*(X, \omega)$ by  the SFT grading. By dimensional reason, we have  $\epsilon_X\langle p \rangle: CH_{2n-2}(X, \omega)\to  \mathbb{Q}$.

\begin{definition} [see \cite{KS}] \label{def1}
The symplectic capacity  is defined by $$c_1(X, \omega):= \inf\{L:    \exists \ \sigma \in CH^L_*(X, \omega)  \mbox{ s.t. } \epsilon_X\langle p \rangle (\sigma) \ne 0 \}. $$
\end{definition}
If $(Y, \lambda)$ is degenerate,  we  find a sequence of smooth functions $\{f_n\}_{n=1}^{\infty}$ such that   $f_n \ge 1$, $f_n \lambda $ are nondegenerate,   and $\{f_n\}_{n=1}^{\infty}$ converges to  $1$ in $C^0$ topology.  Let $(X_n, \omega_n)$ be the composition of $(X, \omega)$ with the following      exact cobordisms  
\begin{equation} \label{eq4}
(X_{f_n}:= \{ (s, y) \in \mathbb{R} \times Y: 1 \le s \le f_n(y) \}, \omega_{f_n}:= d(e^s\lambda)). 
\end{equation}
Then we define the symplectic capacity by $$c_1(X, \omega): =\lim_{n \to \infty} c_1(X_n, \omega_n).$$
According to Corollary \ref{lem9} in the next subsection,  we know that the  limit is independent of the choice of the sequence $\{f_n\}_{n=1}^{\infty}$. 
 
\begin{remark}\label{remark2}
The symplectic capacity $c_1(X, \omega)$ is one of the Higher symplectic capacities defined by   K. Siegel \cite{KS}.  In \cite{KS}, the notation is $\mathfrak{g}_1^{\le 1}(X, \omega)$.  When $(X, \omega)$ is a Liouville domain,  one could expect that   $\mathfrak{g}_k^{\le 1}(X, \omega)$ agrees with the Gutt-Hutchings capacities $c_k^{GH}(X, \omega)$ which are defined by the $S^1$-equivariant symplectic Floer homology \cite{GH}.   The reason is that  Bourgeois and Oancea define  an isomorphism between the linearized contact homology and the  positive  $S^1$-equivariant symplectic Floer homology \cite{BO2}. Moreover, the isomorphism preserves the action filtration.  See Theorem 7.64 of  M. Pereira's thesis \cite{MP} for the details. 

V. L. Ginzburg and  J. Shon  generalize  the Gutt-Hutchings capacities    to the symplectically aspherical symplectic fillings by using the same frame work of J. Gutt and M. Hutchings \cite{GS}.  Therefore,  one can expect that  $c_1(X, \omega)$  also  agrees with first Gutt-Hutchings capacities  in  the  symplectically aspherical setting. 
\end{remark} 
 
\paragraph{Cobordism maps} Let $(X, \omega)$ be an exact  symplectic  cobordism from $(Y_+, \lambda_+)$ to $(Y_-, \lambda_-)$. 
Let $(X_-, \omega_-)$ be a  symplectic filling of  $(Y_-, \lambda_-)$.  The  composition of   $(X, \omega)$ and  $(X_-, \omega_-)$ is denoted by $(X_+, \omega_+)$.  Assume that $X_{\pm}$ are symplectically  aspherical. 

Define  a  map 
$\phi(X, \omega): C_*(X_+, \omega_+) \to  C_*(X_-, \omega_-) $ by 
\begin{equation*}
\phi(X, \omega)  \gamma_+: = \sum_{\gamma_-} \sum_{\gamma_1,...,\gamma_k}\#^{vir} \overline{\mathcal{M}_{X}^J}(\gamma_+; \gamma_-, \gamma_1,...,\gamma_k)  \epsilon_{X_-}(\gamma_1)...\epsilon_{X_-}(\gamma_k) \gamma_-. 
\end{equation*}
Here $\phi(X, \omega)$ is a chain map and it induces a homomorphism 
\begin{equation*}
\Phi(X, \omega): CH_*(X_+, \omega_+) \to  CH_*(X_-, \omega_-). 
\end{equation*}
The map $\Phi(X, \omega)$ is called a \textbf{cobordism map}. Moreover, it  satisfies  
\begin{equation*}
 \epsilon_{X_-}\langle p \rangle \circ \Phi(X, \omega) =  \epsilon_{X_+}\langle p \rangle. 
\end{equation*} 

\begin{lemma}
The homomorphism $\phi(X, \omega)$ maps  $C_*^L(X_+, \omega_+) $ to $  C_*^L(X_-, \omega_-)$. Therefore, we have 
\begin{equation*}
\Phi(X, \omega): CH_*^L(X_+, \omega_+) \to  CH_*^L(X_-, \omega_-). 
\end{equation*}
\end{lemma}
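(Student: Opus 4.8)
The plan is to run the action computation from the proof of Lemma~\ref{lem10}, with the exact cobordism $X$ in the role played there by the symplectization. First I would suppose that the matrix coefficient $\langle\phi(X,\omega)\gamma_+,\gamma_-\rangle$ is nonzero. Unwinding the definition of $\phi(X,\omega)$, this produces a rational holomorphic curve $\mathbf u\in\overline{\mathcal M_X^J}(\gamma_+;\gamma_-,\gamma_1,\dots,\gamma_k)$ in $\widehat X$ together with holomorphic planes $\mathbf u_i\in\overline{\mathcal M_{X_-}^J}(\gamma_i;\emptyset)$ in $\widehat{X_-}$ for $i=1,\dots,k$ (the latter because $\epsilon_{X_-}(\gamma_i)\ne 0$). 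All the orbits involved are contractible in the composed filling $X_+$ — the orbits $\gamma_-,\gamma_1,\dots,\gamma_k$ in fact bound disks inside $X_-\subset X_+$ — so I can fix capping disks $\sigma_+\subset X_+$ of $\gamma_+$ and $\sigma_-,\sigma_1,\dots,\sigma_k\subset X_-$ of $\gamma_-,\gamma_1,\dots,\gamma_k$.

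Next I would glue these disks onto the punctures of $\mathbf u$: the chain $(-\sigma_+)\#\mathbf u\#\sigma_-\#_{i=1}^k\sigma_i$ represents a class in $\pi_2(X_+)$, using that $X$ and $X_-$ both sit inside $X_+$, on which $\omega_+$ restricts to $\omega$ and to $\omega_-$ respectively. Since $X_+$ is symplectically aspherical this sphere has zero $\omega_+$-area, and then Stokes' theorem together with the exponential convergence of $\mathbf u$ at its ends gives, exactly as in Lemma~\ref{lem10},
\begin{equation*}
\int \mathbf u^*\check\omega \;=\; A_{\omega_+}(\gamma_+)-A_{\omega_-}(\gamma_-)-\sum_{i=1}^k A_{\omega_-}(\gamma_i),\qquad A_{\omega_-}(\gamma_i)=\int\mathbf u_i^*\check\omega .
\end{equation*}
Because the energies $\int \mathbf u^*\check\omega$ and $\int\mathbf u_i^*\check\omega$ are all nonnegative, this yields $A_{\omega_+}(\gamma_+)\ge A_{\omega_-}(\gamma_-)+\sum_i A_{\omega_-}(\gamma_i)\ge A_{\omega_-}(\gamma_-)$.

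The conclusion is then immediate: if $\gamma_+$ generates $C_*^L(X_+,\omega_+)$, i.e.\ $A_{\omega_+}(\gamma_+)<L$, every $\gamma_-$ occurring in $\phi(X,\omega)\gamma_+$ satisfies $A_{\omega_-}(\gamma_-)<L$ and hence generates $C_*^L(X_-,\omega_-)$; so $\phi(X,\omega)$ restricts to $C_*^L(X_+,\omega_+)\to C_*^L(X_-,\omega_-)$, and since these are subcomplexes (the filtered analogue of Lemma~\ref{lem10}) and $\phi(X,\omega)$ is a chain map, passing to homology gives $\Phi(X,\omega)\colon CH_*^L(X_+,\omega_+)\to CH_*^L(X_-,\omega_-)$. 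The step I expect to require the most care is the middle one: verifying that the $\check\omega$-energy of $\mathbf u$, which lives in the completed exact cobordism $\widehat X$, genuinely equals the stated difference of symplectic actions computed in two different fillings $X_+$ and $X_-$. This rests on the disk-independence of $A_\omega$ supplied by the aspherical hypothesis on $X_\pm$, on the compatibility of $\omega_+$ with $\omega$ and $\omega_-$ under the decomposition $X_+=X\cup_{Y_-}X_-$, and on keeping track of the boundary terms at the cylindrical ends; none of this is deep, but it is where one could slip.
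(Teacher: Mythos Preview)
Your proposal is correct and is exactly the approach the paper takes: the paper's entire proof reads ``The proof is the same as Lemma~\ref{lem10},'' and you have carried out precisely that adaptation of the energy/asphericity argument to the exact cobordism setting.
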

\begin{proof}
The proof is the same as Lemma 2.1. 
\end{proof}

\begin{corollary} \label{lem9}
Let $i: (X_-, \omega_-) \hookrightarrow (X_+, \omega_+)$ be a symplectic embedding. Suppose that $(X_{\pm}, \omega_{\pm})$ are symplectically aspherical and $(X, \omega)= (X_+ \setminus i(X_-), \omega_+)$ is exact.  Then $c_1(X_-, \omega_-)\le c_1(X_+, \omega_+).$
\end{corollary}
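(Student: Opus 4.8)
The plan is to push a witnessing class through the action-filtered cobordism map of the exact cobordism $X=(X_+\setminus i(X_-),\omega_+)$, using the naturality relation $\epsilon_{X_-}\langle p\rangle\circ\Phi(X,\omega)=\epsilon_{X_+}\langle p\rangle$ recorded just above, with the marked point $p$ fixed once and for all in the interior of $i(X_-)\subset X_+$ so that all three of $\epsilon_{X_+}\langle p\rangle$, $\epsilon_{X_-}\langle p\rangle$ and the composition formula refer to the same point. First I would reduce to the case in which the contact forms $\lambda_\pm$ on $\partial X_\pm$ are nondegenerate; the degenerate case then follows by applying the nondegenerate statement to members of the approximating sequences $\{f_n\}$ of Definition~\ref{def1} and letting $n\to\infty$ in each factor separately, the point being that inserting the exact collars \eqref{eq4} on top of $X_\pm$ does not change which piece is the complement, so $X_{+,n}\setminus i(X_{-,m})$ remains exact and the nondegenerate case gives $c_1(X_{-,m})\le c_1(X_{+,n})$.

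Assume now that $\lambda_\pm$ are nondegenerate and fix any $L>c_1(X_+,\omega_+)$. By Definition~\ref{def1} there is a class $\sigma\in CH_*^L(X_+,\omega_+)$ (necessarily in degree $2n-2$) with $\epsilon_{X_+}\langle p\rangle(\sigma)\neq 0$. The lemma immediately preceding this corollary shows that $\Phi(X,\omega)$ carries $CH_*^L(X_+,\omega_+)$ into $CH_*^L(X_-,\omega_-)$, so $\Phi(X,\omega)(\sigma)\in CH_*^L(X_-,\omega_-)$. Naturality then gives $\epsilon_{X_-}\langle p\rangle\bigl(\Phi(X,\omega)(\sigma)\bigr)=\epsilon_{X_+}\langle p\rangle(\sigma)\neq 0$, so $\Phi(X,\omega)(\sigma)$ is a class in $CH_*^L(X_-,\omega_-)$ on which $\epsilon_{X_-}\langle p\rangle$ does not vanish; hence $c_1(X_-,\omega_-)\le L$. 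Letting $L$ decrease to $c_1(X_+,\omega_+)$ yields $c_1(X_-,\omega_-)\le c_1(X_+,\omega_+)$, and combining with the limiting procedure of Definition~\ref{def1} as above settles the degenerate case too.

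All the substance is in the two cited inputs. The first, that $\Phi(X,\omega)$ respects the $A_\omega$-filtration, is proved exactly as Lemma~\ref{lem10}: since $\omega_+|_X$ is exact, the holomorphic curves in $\widehat{X}$ entering the definition of $\phi(X,\omega)$ have nonnegative $\check\omega$-energy, so they cannot raise the symplectic action. The second, the chain-level identity $\epsilon_{X_-}\langle p\rangle\circ\phi(X,\omega)=\epsilon_{X_+}\langle p\rangle$, is the genuinely delicate point and the main obstacle: it is a neck-stretching statement, obtained by degenerating a cobordism-admissible $J$ on $\widehat{X_+}$ along $\partial i(X_-)$ and analyzing the SFT limit of the planes through $p$—the building carrying $p$ must lie in $\widehat{X_-}$, while the remaining buildings assemble into a curve counted by $\phi(X,\omega)$ capped off by augmentation planes in $\widehat{X_-}$. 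I would treat this exactly as in the standard proof that Siegel's higher capacities are monotone under exact embeddings, invoking the compactness and gluing axioms for $\#^{vir}$ from Section~2; no new phenomena arise because every Reeb orbit involved is contractible in $X_\pm$, so the SFT compactification applies to the entire moduli space, as noted after the dimension formula.
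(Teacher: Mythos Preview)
Your proposal is correct and takes essentially the same approach as the paper: both argue by pushing a witnessing class through the filtration-preserving cobordism map $\Phi(X,\omega)$ and invoking the naturality relation $\epsilon_{X_-}\langle p\rangle\circ\Phi(X,\omega)=\epsilon_{X_+}\langle p\rangle$. The paper simply packages these two inputs into a commutative diagram and says the result follows; your version spells out the diagram chase, adds the care about where $p$ lives, and treats the reduction to the nondegenerate case explicitly, but the substance is identical.
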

\begin{proof}
It is easy to check that we have the following diagram:
$$ \begin{tikzcd}
[row sep=large, column sep = 10ex]
  CH_*^L(X_+, \omega_+)  \arrow[r, "i_+^L"] \arrow[d, "{\Phi(X,  \omega)}"]
    & CH_*(X_+, \omega_+)  \ar[d,  "{\Phi(X,  \omega)}"]  \arrow[r, "\epsilon_{X_+}\langle p \rangle " ] &\mathbb{Q}  \arrow[d ]\\
  CH^L(X_-,  \omega_-) \arrow[r,  "i_-^L" ] &CH(X_-,  \omega_-) \arrow[r,  "\epsilon_{X_-}\langle p \rangle "  ]
&   \mathbb{Q}
\end{tikzcd}$$
Here $i^L_{\pm}: CH_*^L(X_{\pm}, \omega_{\pm}) \to CH_*(X_{\pm}, \omega_{\pm})$	are  the homomorphisms  induced by the inclusion 	$C_*^L(X_{\pm}, \omega_{\pm}) \to C_*(X_{\pm}, \omega_{\pm})$. 	
From the definition and the above diagram, we get  $c_1(X_-, \omega_-)\le c_1(X_+, \omega_+).$
\end{proof}
 
 
\paragraph{Intersection theory for holomorphic curves} Since we consider  the four dimensional case, the intersection theory plays a key role in our argument. Let us have a quick review of it. 

Let $u, u'$ be two  simple holomorphic curves. Suppose that $u/u'$ is asymptotic to Reeb orbit $\gamma_z/\gamma_z'$ at puncture $z/z'$.   The Reeb orbits here are either nondegenerate or Morse-Bott. R. Siefring defines an intersection number $u\bullet u' $  which is a homotopic invariant \cite{S}.  Later,  C. Wendl generalizes it to Morse-Bott setting \cite{Wen2}.   The precise definition of $u\bullet u' $  is as follows: 
\begin{equation*}
u\bullet u' =Q_{\tau}(u, u') - \sum_{(z, z') \in \Gamma_{\pm} \times \Gamma'_{\pm}} \Omega^{\tau}_{\pm}(\gamma_z \mp \epsilon, \gamma_{z'} \mp \epsilon),
\end{equation*}
 where  $Q_{\tau}(u, u')$ is the relative intersection number (see \cite{H2} for its definition), and  the definition of   $\Omega^{\tau}_{\pm}(\gamma_z \mp \epsilon, \gamma_{z'} \mp \epsilon)$ is explained as follows.   If $\gamma_z$ and $\gamma_z'$ are geometrically distinct, then $\Omega^{\tau}_{\pm}(\gamma_z \mp \epsilon, \gamma_{z'} \mp \epsilon):=0$. 
Let $\gamma$ be a simple Reeb orbit. For $\delta \in \mathbb{R}$,  define 
\begin{equation*}
 \Omega^{\tau}_{\pm}(\gamma^n + \delta, \gamma^m + \delta): =mn \min\{ \frac{\mp \alpha_{\mp}^{\tau} (\gamma^m+\delta)}{m},  \frac{\mp \alpha_{\mp}^{\tau} (\gamma^n+\delta)}{n} \}, 
\end{equation*}
where $ \alpha_{-}^{\tau} (\gamma^n+ \delta)= \lfloor \mu_{\tau}(\gamma^n + \delta) /2 \rfloor$   and $ \alpha_{+}^{\tau} (\gamma^n+ \delta)= \lceil \mu_{\tau}(\gamma^n + \delta) /2 \rceil$.   Let $\mathbf{A}_{\gamma}: \Gamma(\gamma^*\xi) \to \Gamma(\gamma^*\xi)$  denote the  asymptotic operator of $\gamma$ (see \cite{Wen2} for its definition).  The notation  $ \mu_{\tau}(\gamma \pm \delta) $
 means  $ \mu_{\tau}(\gamma \pm \delta):=\mu_{\tau} (\mathbf{A}_{\gamma} \pm \delta).$

We will use the following facts (For details, we refer reader to \cite{S, Wen2}):
\begin{enumerate} [label=\textbf{F.\arabic*}]
\item \label{F1}
If $u$ and $u'$ are geometric  distinct, then $u\bullet u' \ge 0$. If  $u\bullet u'  =0 $, then they are disjoint. 
\item\label{F2}
We have the following adjunction formula
\begin{equation*}
u\bullet u=  2\delta_{total}(u)+ \frac{1}{2}( ind u -2 +2g(u) +  \# \Gamma_0(u))  + \sum_{z\in\Gamma} cov_{\infty}(\gamma_z) +  \sum_{z\in\Gamma}  cov_{MB}(\gamma_z),
\end{equation*}
where $\Gamma_0(u)$ denote the punctures which are  asymptotic to   Reeb orbits with even Conley-Zehnder index,  and $\delta_{total}(u)$ is the total singularity index. 

\item \label{F3}
In fact, $\delta_{total}(u) = \delta(u)+\delta_{\infty}(u)$. Here  $\delta(u) \ge 0$ is an algebraic count of self-intersections  and singularities, and  $\delta(u) = 0$ if and only if $u$ is embedded. 
The other term  $\delta_{\infty}(u) \ge 0$ is the ``hidden double points at infinity''. 
If $u$ is asymptotic to distinct simple nondegenerate  Reeb orbits, then $\delta_{\infty}(u)=0$.  
\item \label{F4}
Suppose that $\{\gamma_z\}_{z \in\Gamma}$ lie inside distinct  Mose-Bott manifolds.   If $\gamma_z$   and $\gamma^{\epsilon}_z$ are simple,   then  $ cov_{\infty}(\gamma_z)=    cov_{MB}(\gamma_z)=0$, where $\gamma^{\epsilon}_z$ denotes  any nearby generic orbit in the same Morse-Bott family as $\gamma_z$.  In particular, this is true when the Morse-Bott manifold has minimal contact action. 

\end{enumerate}

\section{Proof of Theorem \ref{thm0}}
Let $\lambda$ be the contact form induced by the connection $A$ given in the Introduction.  Let  $V$ be  the Liouville vector field.
We can assume that $\lambda=\omega(V, \cdot)$ on $Y=\partial X$ by the following reasons.  

Since any two contact forms defining the same contact structure are differed by a positive function, we have $\omega(V, \cdot)=f\lambda$.  By rescaling  $\omega$, we can assume that $f>1$.  We still have $c_1(X, \omega)<\infty$ after rescaling, because $c_1(X, r\omega)=rc_1(X, \omega)$.  Note that  $X \setminus X_f$ still satisfies the assumptions \ref{T1}, \ref{T2}, \ref{T4}, where $X_f$ is given by (\ref{eq4}).  Moreover, by Corollary \ref{lem9}, $c_1(X \setminus X_f, \omega) \le c_1(X, \omega) < \infty.$ Thus, if we can show that  $X \setminus X_f$ is diffeomorphic to $\mathbb{D}E$, then the conclusion still true for the original $(X, \omega)$.  Therefore, we assume that $ \lambda=\omega(V, \cdot)$ on $Y$ from now on. 

To begin with, we  deduce a    topological constrain  of the  symplectic filling $(X, \omega)$  in Theorem \ref{thm0}. 
 \begin{lemma} 
Let $(X, \omega)$ be a  symplectic filling $(X, \omega)$  satisfying \ref{T1}. Then the inclusion $i_*:  H_2(Y, \mathbb{Z}) \to H_2(X, \mathbb{Z})$ is the zero map. 
 \end{lemma}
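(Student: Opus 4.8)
The plan is to use the Gysin sequence for the circle bundle $\pi: Y \to \Sigma$ together with condition \ref{T1}. The homology $H_*(Y,\mathbb{Z})$ is determined by the Gysin sequence: since the Euler class is $e\cdot[\mathrm{pt}]^\vee$ with $e\le -1$, the class $H_2(Y,\mathbb{Z})$ is generated by $\pi^{-1}$ of a point (the fiber class) and the pullback/section-type classes, modulo the relations coming from cap product with the Euler class. Concretely, $H_1(Y,\mathbb{Z}) \cong \mathbb{Z}^{2g} \oplus \mathbb{Z}/|e|$, and $H_2(Y,\mathbb{Z}) \cong \mathbb{Z}^{2g}$, where the rank-$2g$ part in degree $2$ comes from lifting the $1$-cycles on $\Sigma$ to $2$-cycles swept out by the circle fibers over them. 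The key structural point is that every class in $H_2(Y,\mathbb{Z})$ can be represented by a torus (or union of tori) fibered over a $1$-cycle $c \subset \Sigma$; such a torus $T_c$ is, up to homology, the preimage $\pi^{-1}(c)$.

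First I would make this representation explicit: given $\alpha \in H_2(Y,\mathbb{Z})$, write $\pi_*\alpha \in H_1(\Sigma,\mathbb{Z})$ — note $\pi_*\alpha$ lands in $H_1$ because $\pi$ has one-dimensional fibers — and observe from the Gysin sequence that $\pi_*$ is an isomorphism $H_2(Y,\mathbb{Z}) \xrightarrow{\sim} H_1(\Sigma,\mathbb{Z})$ (the adjacent terms are $H_0(\Sigma)\xrightarrow{\cap e} H_2(\Sigma)$, which is injective since $e\ne 0$, killing the potential fiber-class contribution in degree $2$, and $H_3(\Sigma)=0$). So $\alpha$ is determined by $c := \pi_*\alpha \in H_1(\Sigma,\mathbb{Z})$, and $\alpha = [\pi^{-1}(c)]$ where we pick an embedded representative $c$ of the $1$-cycle and $\pi^{-1}(c)$ is a disjoint union of $2$-tori in $Y$.

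Next I would use \ref{T1}: the fiber $F$ of $Y$ is contractible in $X$, so $i_*[F] = 0$ in $H_1(X,\mathbb{Z})$, and more to the point, there is a disk $D \subset X$ with $\partial D = F$. Now take any $\alpha \in H_2(Y,\mathbb{Z})$ with embedded representative the torus $T = \pi^{-1}(c)$, $c$ an embedded circle in $\Sigma$. The torus $T$ fibers over $c \cong S^1$ with fiber $F$; since $F$ bounds the disk $D$ in $X$, we can cap off $T$ fiberwise — more carefully, $T$ is swept out by a circle's worth of copies of $F$, and using the contraction (a homotopy from $F$ to a point, equivalently the disk $D$ together with a trivialization of the family) one builds a $3$-chain in $X$ bounding $T$: concretely, the mapping cylinder of the family $\{F_x\}_{x\in c}$ contracting to points gives a singular $3$-chain $W$ with $\partial W = T$. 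Hence $i_*\alpha = [T] = \partial[W] = 0$ in $H_2(X,\mathbb{Z})$. Since $\alpha$ was arbitrary, $i_* : H_2(Y,\mathbb{Z}) \to H_2(X,\mathbb{Z})$ is the zero map.

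The main obstacle I anticipate is the fiberwise capping construction: \ref{T1} a priori only gives that a single fiber is null-homotopic, and one must upgrade this to a null-homology of the whole torus $\pi^{-1}(c)$, i.e., produce a coherent family of cappings over $c$. This is not automatic from contractibility of one fiber alone, but it does follow because all fibers are homotopic (they are the orbits of the free $S^1$-action, so the family $\{F_x\}_{x\in\Sigma}$ is a fiber bundle with contractible... no — with fiber $S^1$), and the obstruction to extending the null-homotopy over $c$ lives in $H_1(c;\pi_1(\text{space of disks in }X \text{ bounding } F))$-type groups; the cleanest fix is to argue at the level of homology rather than homotopy: $[\pi^{-1}(c)] = \mathrm{PD}(\pi^*\mathrm{PD}[c])$ and the fiber class $[F]$ generates the kernel of $H_1(Y)\to H_1(\Sigma)$-type data, so $[\pi^{-1}(c)]$ maps to a class in $H_2(X)$ that is detected by pairing with $H^2(X)$; using that $[F]=0$ in $H_1(X)$ and a Mayer–Vietoris / long exact sequence argument for the pair $(X,Y)$ forces $i_*\alpha=0$. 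I would present the homological version to keep the argument clean and avoid the homotopy-theoretic subtlety. Alternatively, and perhaps most efficiently, one invokes the long exact sequence of the pair $(X,Y)$: the boundary map $\partial: H_3(X,Y) \to H_2(Y)$ together with Poincaré–Lefschetz duality $H_3(X,Y)\cong H^1(X)$ and the fact that $\mathrm{rk}\,H^1(X) \ge \mathrm{rk}\,H^1(Y) - \mathrm{rk}(\text{stuff killed by }[F])$ shows $\partial$ is surjective, whence $i_* = 0$; I would check the rank bookkeeping carefully as that is where an off-by-one could hide.
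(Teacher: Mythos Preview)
Your approach is essentially identical to the paper's: represent every class in $H_2(Y;\mathbb{Z})$ by a torus $\pi^{-1}(c)$ over a $1$-cycle $c\subset\Sigma$ (the paper cites Nelson--Weiler rather than running the Gysin sequence, but the content is the same), then use contractibility of the fiber to cap it off by a solid torus in $X$. The paper's proof is three sentences and asserts exactly what you wrote: ``Since each fiber in $\pi^{-1}(\eta)$ bounds a disk $\sigma_t$ in $X$, the union of the disks $\cup_{t\in S^1}\sigma_t$ forms a solid torus whose boundary is $\pi^{-1}(\eta)$.''

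The subtlety you flag --- that \ref{T1} gives a capping disk for a single fiber and one must produce a \emph{coherent family} over $c$ to get a solid torus --- is genuine, and the paper does not address it either. The obstruction to extending $T\cong c\times S^1\hookrightarrow X$ over $c\times D^2$ lives in $\pi_2(X)$; without further input one only concludes that $i_*[T]$ is a spherical class in $H_2(X)$, not that it vanishes. So your instinct is good: neither your main argument nor the paper's terse proof actually closes this gap, and your long-exact-sequence alternative would require $i^*:H^1(X)\to H^1(Y)$ to be surjective, which is not available from \ref{T1} alone. It is worth noting, however, that the only place the paper uses this lemma (in the proof of Lemma~\ref{lem5}) is to say a class $A\in H_2(X;\mathbb{Z})$ can be represented by a cycle disjoint from $Y$, so that $Z\cdot A=0$ for a chain $Z$ supported in $Y$; but any absolute $2$-cycle in a manifold with boundary can be pushed off the boundary via the collar, so the application goes through regardless.
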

 \begin{proof}
By Lemma 3.7 of  \cite{NW},  each class in  $H_2(Y, \mathbb{Z}) $ is represented by $\pi^{-1}(\eta)$, where $\eta: S^1 \to \Sigma$ is a representative of a class in  $H_1(\Sigma, \mathbb{Z})$. Since each fiber in $\pi^{-1}(\eta)$  bounds a disk $\sigma_t$ in $X$, the union of the disks  $\cup_{t\in S^1} \sigma_t $ forms a solid torus whose boundary is $\pi^{-1}(\eta)$.  Then  $[\pi^{-1}(\eta)]=0 \in H_2(X, \mathbb{Z})$. 
\end{proof}


\subsection{Nondegenerate situation}
There should be a Morse-Bott version of  linearized contact homology, but  to the author's knowledge, such a version has not yet appeared in any literature. Now the contact form  of the prequantization  bundle is Morse-Bott. Thus,  to make use of the linearized contact homology, we first need to  make some perturbation.   
   \paragraph{Morse-Bott perturbations}
Under  a unitary trivialization, the Reeb vector field is $R=2\pi \partial_{\theta}$. Therefore, 
every fiber $\gamma_z =\pi^{-1}(z)$ is a Reeb orbit of $(Y, \lambda)$.  Also, their  iterations are the only Reeb orbits.  Thus,  the contact form $\lambda$ is Morse--Bott,  and the base $\Sigma$ is a  Morse-Bott submainfold.   
Fix a  perfect Morse function $H: \Sigma \to \mathbb{R}$.  By the standard Morse-Bott perturbation \cite{BO} (also see   Nelson and Weiler's  papers   \cite{NW} in our setting),    define a contact form 
\begin{equation*}
\lambda_{\varepsilon}: = (1+{\varepsilon}\pi^*H) \lambda,
\end{equation*}
where   $0 < \varepsilon \ll1  $ is a small fixed number. 

Let $p_{\pm}$ be the minimum, maximum of $H$,  and $\{p_i\}_{i=1}^{2g}$ be the saddle points of $H$ respectively.   Let $e_{\pm}:= \pi^{-1}(p_{\pm})$ and $h_i:= \pi^{-1}(p_i)$.   For any $L$, there exists $\varepsilon_L>0$ such that for $0<\varepsilon< \varepsilon_L$,  
\begin{itemize}
\item
$\lambda_{\varepsilon}$ is $L$-nondegenerate,
\item
$e_{\pm}$ and $h_i$ are the only simple  Reeb orbits of $\lambda_{\varepsilon}$ with $\mathcal{A}_{\lambda_{\varepsilon}}<L$.  
\end{itemize}  Moreover,  $e_{\pm}$ are elliptic orbits and $\{h_i\}_{i=1}^{2g}$ are positive hyperbolic orbits. All of these orbits and  their iterations  are good   orbits.  By definition, the contact action  of the Reeb orbits are 
\begin{equation*}
\mathcal{A}_{\lambda_{\varepsilon}} (\gamma_{p}^k) = \int_{\gamma_{p}^k} \lambda_{\varepsilon}= k(1+ \varepsilon H(p)).  
\end{equation*}

%

Let $(X, \omega)$ be a symplectically  aspherical symplectic filling of $(Y, \lambda)$.   Fix  $0< \varepsilon \ll1$. We have a symplectic form $\omega_{\varepsilon}$ such that $\omega_{\varepsilon} =\omega$ outside a collar neighbourhood of $Y$   and $\omega_{\varepsilon} =d(e^s \lambda_{\varepsilon})$ near $Y$.   Moreover,   $\omega_{\varepsilon}$ converges to $\omega$ in $C^{\infty}$-topology as $\varepsilon \to 0$.  The construction of    $\omega_{\varepsilon}$  is as follows.  Using the Liouville vector field, we identify a collar neighbourhood of $Y$ by $((-\delta_0, 0]_s \times Y, \omega = d (e^s \lambda))$. 
Let $\varepsilon(s)$ be a nondecreasing cut off function such that $\varepsilon(s)=\varepsilon \ll 1$ when  $s \ge  -\frac{3}{4} \delta_0$ and  $\varepsilon(s)=0$ when $s \le - \delta_0$. Define 
\begin{equation*} 
\omega_{\varepsilon} :=
\begin{cases}
\omega  & s  \le  -\delta_0 \\
d(e^s(1+ \varepsilon(s)\pi^*H) \lambda)  &    -\delta_0 \le s \le 0. 
\end{cases}
\end{equation*}
By Stokes' theorem,  $(X, \omega_{\varepsilon})$  is still  symplectically aspherical.  One can define a diffeomorphism $i_{\varepsilon}: \widehat{X} \to \widehat{X}$ by  $i_{\varepsilon}:=id$ when $s  \le  -\delta_0$, and 
\begin{equation*} 
i_{\varepsilon}(s, y):=(s+ \log(1+ \varepsilon(s)\pi^*H),  y)  \mbox{ when }  s \ge   -\delta_0. 
\end{equation*}
Then $i_{\varepsilon}^* \omega =\omega_{\varepsilon}$ on $X$. 
 By definition and Corollary \ref{lem9},  $c_1(X, \omega_{\varepsilon})$  is still  finite provided that $c_1(X, \omega)$ is finite. 
Fix a generic cobordism admissible   almost complex structure $J_{\varepsilon}$ of $(X, \omega_{\varepsilon})$.

\paragraph{Degree and index of holomorphic curves} Before we move on, let us  recall  the degree of holomorphic curves defined by J. Nelson and M. Weiler \cite{NW}. It is very useful in many argument later. 
Let $u: \mathbb{S}^2 -\Gamma \to \mathbb{R} \times Y$ be a rational curve. Since $\pi \circ u$ maps the punctures to points on $\Sigma$, one  can extend $\pi\circ u$  to a map $\pi\circ \bar{u}: \mathbb{S}^2 \to \Sigma.$ The \textbf{degree} of $u $ is $deg(u):= deg(\pi\circ \bar{u})$.  Because $\pi_2(\Sigma)=0$, the degree $deg(u)$ always vanishes in our situation.  Suppose that the positive ends of  $u$ are asymptotic to the fibers with total multiplicity $M$, and the  negative  ends of  $u$ are asymptotic to the fibers with total multiplicity $N$.  Nelson and   Weiler \cite{NW} show that $$deg(u)= (M-N)/|e|. $$  Therefore, we have $M=N$ for rational curves.  


\paragraph{Trivializtion}  As explained in \cite{NW}, any trivialization of $T_p\Sigma$  can be lifted to a trivialization $\tau$ of $\gamma_p^*\xi$.  Moreover, the linearized Reeb flow is the identity map with respect to $\tau$. We extend $\tau$ to be a  trivialization of $\sigma^*TX$ through the disk $\sigma$ bounded by $\gamma_p$, still denoted by $\tau$.  We fix  such a  trivialization $\tau$  throughout. 

\paragraph{Moduli space of rational curves} Because $c_1(X, \omega)$ is finite, there exists $L>0$ such that  $c_1(X, \omega_{\varepsilon})<L$ for any small $\varepsilon$.  We fix $\varepsilon>0$ such that $\varepsilon< \varepsilon_L$.

  Recall that the Reeb orbits $e^k_{\pm}$ and $h^k_i$ are contractible in $X$ by the assumption \ref{T1}. Thus they  have well-defined  Conley-Zehnder index and symplectic action.  The next three  lemmas deduce the precise values of the Conley-Zehnder index.     
\begin{lemma} \label{lem1}
For any $1 \le k < \lfloor L \rfloor$, the Conley-Zehnder indexes  satisfy   the following properties:
\begin{equation*}
\begin{split}
&CZ(e^k_+)=kCZ(e_-)+k+1, CZ(h^k_i)= kCZ(e_-)+k, \mbox{ and } CZ(e^k_-)=kCZ(e_-)+k-1.
\end{split}
\end{equation*}

\end{lemma}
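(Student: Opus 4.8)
The plan is to compute the Conley--Zehnder indices of the Reeb orbits $e_\pm^k$ and $h_i^k$ relative to the fixed trivialization $\tau$ by combining two ingredients: the explicit description of the linearized Reeb flow along the fibers of the Morse--Bott contact form $\lambda$, and the perturbation contribution coming from the Morse function $H$ on $\Sigma$. I would first recall that with respect to $\tau$ (lifted from a trivialization of $T_p\Sigma$) the linearized Reeb flow of the unperturbed contact form $\lambda$ is the identity, so $\gamma_p^k$ sits in a Morse--Bott family and has a degenerate "index" which I would pin down via the asymptotic operator $\mathbf A_{\gamma_p^k}$. Turning on the perturbation $\lambda_\varepsilon = (1+\varepsilon\pi^*H)\lambda$ splits this family: near a critical point $p$ of $H$ the linearized return map becomes a small rotation (for the elliptic points $p_\pm$, where the Hessian of $H$ is definite) or a small hyperbolic shear (for the saddle points $p_i$). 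This is exactly the standard Morse--Bott perturbation picture of Bourgeois--Oancea / Nelson--Weiler, and it gives, for the $k$-fold cover, $\mu_\tau(e_\pm^k) = 2\lfloor k\cdot(\text{small rotation number})\rfloor \pm (\text{Morse index correction})$ and $\mu_\tau(h_i^k) = k\cdot 0 + (\text{saddle Morse index})$, with the precise constants $\mathrm{Morse}(p_+)=2$, $\mathrm{Morse}(p_i)=1$, $\mathrm{Morse}(p_-)=0$ for a perfect Morse function on a genus-$g$ surface.

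The second, more delicate, ingredient is passing from $\mu_\tau$ (the Conley--Zehnder index of the Reeb orbit, computed in the symplectization) to $CZ(\gamma):=2c_\tau(\sigma^*TX)+\mu_\tau(\gamma)$, which involves the relative first Chern number over a disk $\sigma\subset X$ bounding $\gamma_p$. Here I would use that the trivialization $\tau$ on $\gamma_p^*\xi$ was chosen to extend over $\sigma^*TX$, and that by \ref{T2} the quantity $CZ(\gamma)$ is independent of the choice of $\sigma$; so I may compute $c_\tau(\sigma^*TX)$ using any convenient filling disk. The natural choice is the disk fiber of $\mathbb D E$ itself (rather than an abstract disk in $X$), or alternatively to relate $c_\tau(\sigma^*TX)$ to the self-intersection number $e$ of the bundle, which is why the Euler number enters — though in fact the \emph{relative} $c_\tau$ over a single disk fiber is $0$ or $\pm 1$ once $\tau$ is normalized as in \cite{NW}, and the role of $e$ is absorbed into the statement's hypothesis $k<\lfloor L\rfloor$ controlling how many covers we need. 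I would set $a := CZ(e_-)$ as the reference value, express $CZ(e_-^k)$, $CZ(h_i^k)$, $CZ(e_+^k)$ all in terms of $k$, $a$, and the Morse-index corrections, and observe that the spacing between the three types of orbits over the same multiplicity $k$ is governed purely by the Morse data: $CZ(e_+^k)-CZ(h_i^k)=1$ and $CZ(h_i^k)-CZ(e_-^k)=1$. Iterating (using the index-iteration behavior of elliptic and hyperbolic orbits: hyperbolic indices are additive, elliptic ones are $k\cdot(\text{rotation})$ rounded) then yields $CZ(e_-^k)=k\,CZ(e_-)+k-1$, and the other two formulas follow by adding $1$ and $2$.

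Concretely the key steps, in order, are: (1) fix the normalized trivialization $\tau$ as in \cite{NW} and record that the unperturbed linearized flow is the identity; (2) compute, for each critical point $p$ of $H$, the linearized Reeb flow of $\lambda_\varepsilon$ along $\gamma_p^k$ and read off $\mu_\tau(\gamma_p^k)$ for $1\le k<\lfloor L\rfloor$ — this is where the bound on $k$ is used to ensure the perturbation parameter $\varepsilon$ is small enough that no extra "crossing" occurs before multiplicity $\lfloor L\rfloor$; (3) add the relative Chern term to get $CZ(\gamma_p^k)$, checking $\tau$-independence via \ref{T2}; (4) assemble the three identities, using additivity of $\mu_\tau$ for the positive-hyperbolic $h_i$ and the rounding formula for the elliptic $e_\pm$. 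The main obstacle I expect is step (2)–(3): getting the \emph{signs} and the exact integer constants right in the rounding formula for the elliptic orbits $e_\pm^k$ (the $\pm1$ and the $k$-dependence) and confirming that the relative Chern contribution over the filling disk is exactly what makes the three formulas line up with a common linear term $k\,CZ(e_-)+k$; this is the standard but error-prone bookkeeping in Morse--Bott SFT index computations, and I would double-check it against the degree formula $M=N$ from \cite{NW} and against consistency with the SFT grading $|\gamma|=(n-3)+CZ(\gamma)$.
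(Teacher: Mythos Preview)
Your approach is correct in spirit but considerably more elaborate than the paper's, and it misattributes where the $k$-linear term comes from. The paper's proof is a direct two-step computation: (i) cite Lemma~3.9 of \cite{NW} for the exact values $\mu_\tau(e_\pm^k)=\pm 1$ and $\mu_\tau(h_i^k)=0$ (these are \emph{constant} in $k$ for $k<\lfloor L\rfloor$, since the rotation number is $O(\varepsilon)$), and (ii) build the capping disk for $\gamma_p^k$ by gluing $k$ copies of a fixed disk $\sigma\subset X$ bounding $e_-$ to a cylinder $\sigma_{p_-,p}=\pi^{-1}(\eta)\subset Y$ over a path $\eta$ from $p_-$ to $p$; Lemma~3.12 of \cite{NW} gives $c_\tau((\sigma_{p_-,p})^*\xi)=0$, so $c_\tau$ of the glued disk is simply $k\,c_\tau(\sigma^*TX)$. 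Then $CZ(\gamma_p^k)=2k\,c_\tau(\sigma^*TX)+\mu_\tau(\gamma_p^k)$, and writing $CZ(e_-)=2c_\tau(\sigma^*TX)-1$ yields all three formulas in one line.

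The substantive discrepancy is your step~(4): you locate the linear term $k\,CZ(e_-)+k$ in the ``index-iteration behavior'' of $\mu_\tau$ (``elliptic ones are $k\cdot(\text{rotation})$ rounded''), but $\mu_\tau(e_-^k)=-1$ does not grow with $k$ in this range. The entire $k$-dependence sits in the Chern term $2k\,c_\tau(\sigma^*TX)$, not in $\mu_\tau$. Your plan would likely uncover this upon execution, but as written it points at the wrong mechanism. Also, your suggestion to use ``the disk fiber of $\mathbb{D}E$'' is a red herring: we are in an arbitrary filling $X$, and the disk $\sigma$ must be the abstract one supplied by \ref{T1}. The paper's cylinder trick (vanishing of $c_\tau$ over $\pi^{-1}(\eta)$) is exactly what lets one compare the Chern contributions for disks bounding fibers over \emph{different} critical points without knowing anything further about $X$; this step is missing from your outline and is what makes the spacing $CZ(e_+^k)-CZ(h_i^k)=CZ(h_i^k)-CZ(e_-^k)=1$ rigorous rather than heuristic.
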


\begin{proof}
Let $\sigma: \mathbb{D} \to X$  be the disk bounded by $e_-$.  Let $p\in \Sigma$ and  $\sigma_{p_-, p}: [0,1] \times S^1 \to Y$ be the cylinder corresponding to $\pi^{-1}({\eta})$, where $\eta$  is  a path line from  $p_-$ to $p$. Gluing $\sigma$ and $\sigma'$ along $e_-$ produces a disk $\sigma\# \sigma_{p_-, p}$ bounded by $\gamma_p=\pi^{-1}(p)$.

By the proof of Lemma 3.12 in \cite{NW}, we get $c_{\tau}((\sigma_{p_-, p})^*\xi) =0$. By Lemma 3.9 of  \cite{NW}, $\mu_{\tau}(e_{\pm}^k)=\pm 1$ and $\mu_{\tau}(h_i^k)=0$. Therefore, we have 
\begin{equation*}
\begin{split}
&CZ(e_-)=2c_{\tau}(\sigma^*TX) -1,\\
&CZ(e^k_-)=2kc_{\tau}(\sigma^*TX) -1=kCZ(e_-)+k-1,\\
&CZ(e^k_+)=2kc_{\tau}((\sigma\#\sigma_{p_-, p_+})^*TX) +1 =2kc_{\tau}(\sigma^*TX) +1=kCZ(e_-)+k+1,  \\
&CZ(h^k_i)=2c_{\tau}((\sigma\#\sigma_{p_-, p_i})^*TX)  =2c_{\tau}(\sigma^*TX) =kCZ(e_-)+k.\\
\end{split}
\end{equation*}
\end{proof}

\begin{lemma} \label{lem2}
Suppose that $c_1(X, \omega_{\varepsilon})< L $. Then either  $CZ(e_-)=1$ or $CZ(e_-)=3$. 
\end{lemma}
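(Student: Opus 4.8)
The idea is to exploit the finiteness hypothesis $c_1(X,\omega_\varepsilon)<L$ together with the capacity being detected in degree $2n-2 = 2$ (here $n=2$). Recall that $\epsilon_X\langle p\rangle$ is defined on $CH_2(X,\omega_\varepsilon)$, so the class $\sigma$ witnessing finiteness of $c_1$ is a cycle of SFT grading $2$ built out of good Reeb orbits $e_\pm^k$, $h_i^k$ that are contractible in $X$ and have small action. I would first write down the SFT gradings of all these orbits using Lemma \ref{lem1}: with $n=2$ we have $|\gamma| = -1 + CZ(\gamma)$, so
\[
|e_+^k| = kCZ(e_-)+k, \quad |h_i^k| = kCZ(e_-)+k-1, \quad |e_-^k| = kCZ(e_-)+k-2.
\]
For the complex to carry a nonzero class in degree $2$ that is moreover hit nontrivially by $\epsilon_X\langle p\rangle$ (which counts holomorphic planes through a point, hence needs a generator in degree $2n-2=2$ that bounds such a plane), there must exist at least one good orbit among $\{e_\pm^k,h_i^k\}$ of SFT grading exactly $2$. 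Setting each of the three expressions above equal to $2$ and solving for $CZ(e_-)$ with $k\ge 1$ an integer gives: $|e_+^k|=2$ forces $k(CZ(e_-)+1)=2$, i.e. $k=1, CZ(e_-)=1$ or $k=2, CZ(e_-)=0$; $|h_i^k|=2$ forces $k(CZ(e_-)+1)=3$, i.e. $k=1,CZ(e_-)=2$ or $k=3, CZ(e_-)=0$; $|e_-^k|=2$ forces $k(CZ(e_-)+1)=4$, i.e. $k=1,CZ(e_-)=3$ or $k=2,CZ(e_-)=1$ or $k=4,CZ(e_-)=0$. So a priori $CZ(e_-)\in\{0,1,2,3\}$.

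Next I would rule out $CZ(e_-)=0$ and $CZ(e_-)=2$. The cleanest route is parity: since $e_\pm$ are elliptic and the $h_i$ are positive hyperbolic, and since $\mu_\tau(e_\pm^k)=\pm1$, $\mu_\tau(h_i^k)=0$ in the fixed trivialization, the number $CZ(e_-) = 2c_\tau(\sigma^*TX)-1$ is necessarily odd (it equals an even number minus one). This immediately eliminates $CZ(e_-)=0$ and $CZ(e_-)=2$, leaving exactly $CZ(e_-)=1$ or $CZ(e_-)=3$, which is the claim. I expect this parity observation — already implicit in the computation $CZ(e_-)=2c_\tau(\sigma^*TX)-1$ in the proof of Lemma \ref{lem1} — to do essentially all the work, so the argument should be short.

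The one point that needs genuine care, and which I expect to be the main obstacle, is justifying that the finiteness of $c_1(X,\omega_\varepsilon)$ really does force the existence of a good generator in SFT degree $2$. This requires knowing (a) that $\epsilon_X\langle p\rangle$ lands in $CH_2$ for dimensional reasons (a holomorphic plane through a point in a $4$-manifold has index/virtual dimension matching degree $2$), so any class it evaluates nontrivially on must have a degree-$2$ representative, and (b) that $CH_2^L$ being nonzero in the relevant range forces one of the orbits $e_\pm^k, h_i^k$ with $k<\lfloor L\rfloor$ — the only generators of action $<L$ — to sit in degree $2$. Both are essentially bookkeeping with the definitions of $C_*^L$, the SFT grading, and Definition \ref{def1}, but one must make sure the action bound $L$ and the iteration bound $k<\lfloor L\rfloor$ are compatible with $\varepsilon<\varepsilon_L$ so that no other orbits intervene. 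Once that is in place, the parity argument closes the lemma.
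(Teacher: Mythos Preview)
Your proposal is correct and follows essentially the same approach as the paper: both arguments combine the formulas of Lemma~\ref{lem1}, the oddness of $CZ(e_-)=2c_\tau(\sigma^*TX)-1$, and the observation that finiteness of $c_1(X,\omega_\varepsilon)$ forces $CH_2^L\neq 0$ and hence a generator in SFT degree~$2$. The only cosmetic difference is that the paper rules out the ranges $CZ(e_-)\le -1$ and $CZ(e_-)\ge 5$ via inequalities, whereas you solve the degree-$2$ equations directly and then invoke parity; the content is the same.
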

\begin{proof}
Assume that $CZ(e_-)=2c_{\tau}(\sigma^*TX)-1 \le -1$.  
By Lemma \ref{lem1}, we have 
\begin{equation*}
\begin{split}
&CZ(e^k_+)\le 1, CZ(h^k_i) \le 0,  \mbox{ and } CZ(e^k_-) \le -1.
\end{split}
\end{equation*}
As a result, $CH_2^{L}(X, \omega_{\varepsilon}) =0$.  This contradicts  the finiteness of $c_1(X, \omega_{\varepsilon})$. 

Since $CZ(e_-)$ is odd, we have $CZ(e_-) \ge 1$.  Suppose that $CZ(e_-) \ge  5$.   Lemma \ref{lem1} implies that 
\begin{equation*}
\begin{split}
&CZ(e^k_+)\ge 6k+1\ge 7, CZ(h^k_i)\ge 6k \ge 6,   \mbox{ and }  CZ(e^k_-)\ge 6k-1 \ge 5.
\end{split}
\end{equation*}
Again, $CH_2^L(X, \omega_{\varepsilon}) = 0$, and we have a contradiction. 
Since $CZ(e_-)$ is odd, we have either $CZ(e_-)=3$ or $CZ(e_-)=1$. 
\end{proof}

\begin{lemma} \label{lem7}
Suppose that $c_1(X, \omega_{\varepsilon})< L$. Then we must  have $CZ(e_-)=1$. 
\end{lemma}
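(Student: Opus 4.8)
The plan is to rule out the remaining alternative $CZ(e_-)=3$ from Lemma \ref{lem2} by exploiting the finiteness of $c_1(X,\omega_\varepsilon)$ more carefully than in Lemma \ref{lem2}: if $CZ(e_-)=3$, I want to show that no class $\sigma\in CH^L_*(X,\omega_\varepsilon)$ in degree $2n-2=2$ can satisfy $\epsilon_X\langle p\rangle(\sigma)\ne0$, contradicting $c_1(X,\omega_\varepsilon)<L$. First I would use Lemma \ref{lem1} with $CZ(e_-)=3$ to list the Conley–Zehnder indices of all Reeb orbits below action $L$: one gets $CZ(e_+^k)=4k+1$, $CZ(h_i^k)=4k$, $CZ(e_-^k)=4k-1$, so the SFT-degree-$2$ generators (recall $|\gamma|=(n-3)+CZ(\gamma)=CZ(\gamma)-1$ for $n=2$) are exactly the simple orbits $e_-$ (with $|e_-|=2$) — the orbit $h_i$ has $|h_i|=3$, $e_+$ has $|e_+|=4$, and all higher iterates have larger index. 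So $CH_2^L(X,\omega_\varepsilon)$ is generated over $\mathbb{Q}$ by the single class $e_-$ (if it survives the differential).

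The key step is then to compute, or at least constrain, $\epsilon_X\langle p\rangle(e_-)$, i.e. the virtual count of holomorphic planes in $\widehat X$ asymptotic to $e_-$ and passing through a generic point $p$. Here I would run the standard index/dimension bookkeeping: the moduli space $\overline{\mathcal M_X^{J_\varepsilon}}(e_-;\emptyset)\langle p\rangle$ has virtual dimension $(n-3)\chi(u)+2c_\tau(u^*TX)+\mu_\tau(e_-)-2$ (the $-2$ from the point constraint), and with $\chi=1$, $n=2$ this is $-1+2c_\tau(\sigma^*TX)+\mu_\tau(e_-)-2 = CZ(e_-)-1-2 = CZ(e_-)-3$. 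So when $CZ(e_-)=3$ the constrained moduli space is $0$-dimensional and its count can a priori be nonzero — so a naive dimension count does not immediately kill it. Instead I would argue that such a plane cannot exist (or its signed count must vanish) for geometric reasons: a plane asymptotic to $e_-$ with such low energy/index, together with a second plane through another generic point or the foliation-type argument, would violate the intersection/adjunction inequalities \ref{F1}–\ref{F4} — for instance, two such planes through distinct points would have to intersect (since their asymptotic orbits are the same simple orbit $e_-$, the asymptotic contribution to $u\bullet u'$ is controlled by $\Omega^\tau$ which with $\mu_\tau(e_-^k)=k$-type data is forced positive), contradicting that a foliation-type count should give disjoint leaves; more directly, the adjunction formula \ref{F2} applied to such a plane forces $e_-\bullet e_- <0$ or a non-embedded/non-simple conclusion incompatible with $b_2^+(X)=0$ and \ref{T4}. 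Alternatively — and this may be the cleaner route — I would use the cobordism/capacity monotonicity: if $CZ(e_-)=3$, then by comparing with a ball or with the cap $\mathbb{D}E^*$ one shows $c_1$ would have to be infinite, because there is simply no degree-$2$ class available to be hit by $\epsilon_X\langle p\rangle$ once one also checks $e_-$ is a boundary (or that $\epsilon_X\langle p\rangle$ vanishes on it).

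I expect the main obstacle to be precisely this last point: showing that $\epsilon_X\langle p\rangle(e_-)=0$ when $CZ(e_-)=3$, since a straight dimension count permits a nonzero value. The resolution should come from the intersection theory of Section 2: a $J_\varepsilon$-holomorphic plane through a generic point, asymptotic to the minimum-action elliptic orbit $e_-$, must be embedded (by \ref{F3}, as it is asymptotic to a simple nondegenerate orbit and $\delta_\infty=0$), and then \ref{F2} together with the degree formula $M=N$ of Nelson–Weiler (a plane has one positive end and no negative ends, so $M=1,N=0$, forcing $deg(u)=1/|e|\notin\mathbb Z$) gives an outright contradiction — no such plane exists at all, hence $\epsilon_X\langle p\rangle$ is identically zero on degree-$2$ classes, so $c_1(X,\omega_\varepsilon)=\infty$, contradiction. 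Thus $CZ(e_-)=1$. I would double-check the edge cases where $e_-$ might not be a cycle (so $CH_2^L$ could vanish outright, which also forces $c_1=\infty$ and is equally a contradiction), so that both sub-cases close.
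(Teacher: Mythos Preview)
Your opening reduction is fine: under $CZ(e_-)=3$ the only degree-$2$ generator with action below $L$ is $e_-$, so you must show $\epsilon_X\langle p\rangle(e_-)=0$.  But your proposed ``cleaner route'' via the Nelson--Weiler degree formula is wrong.  That formula $deg(u)=(M-N)/|e|$ applies only to holomorphic curves $u:\mathbb{S}^2\setminus\Gamma\to\mathbb{R}\times Y$ in the \emph{symplectization}, where the composition $\pi\circ u$ extends over the punctures to a map $\mathbb{S}^2\to\Sigma$.  A holomorphic plane in the completed filling $\widehat{X}$ has no such projection --- the interior of $X$ is not fibered over $\Sigma$ --- so there is no degree obstruction, and in fact planes in $\widehat{X}$ asymptotic to simple fibers \emph{do} exist (they are exactly the leaves of the foliation constructed later in the paper).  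So this step collapses and the argument does not close.

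Your intersection-theory sketch is too vague to assess, and the specific claims (``$e_-\bullet e_-<0$'', or that two planes through distinct points must intersect) are not justified.  A version of this adjunction approach can be made to work, but it requires the relative intersection number computation $|e|^2Q_\tau(u)\le |e|$ from \ref{T4}, and even then only yields $2\delta(u)\le -1+1/|e|$, which gives a contradiction only when $|e|\ge 2$; the case $e=-1$ remains.  The paper instead uses a uniform argument: move the constrained point $p$ along a path $\eta$ out to a point $q\in Y$ away from the critical fibers.  The moduli space $\overline{\mathcal{M}_X^{J_\varepsilon}}(e_-;\emptyset)\langle\eta\rangle$ is $1$-dimensional, and its virtual boundary count expresses $\epsilon_X\langle p\rangle(e_-)$ as a sum of contributions from broken curves through $q$.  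Since $e_-$ has minimal contact action, any symplectization level in such a breaking has both ends on $e_-$ with zero energy, hence is the trivial cylinder $\mathbb{R}\times e_-$ --- which does not pass through $q$.  Thus the boundary contribution at $q$ vanishes and $\epsilon_X\langle p\rangle(e_-)=0$, giving the contradiction.
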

\begin{proof}
It suffices to rule out the case that  $CZ(e_-)=3$. We prove this by contradiction argument. 
Assume $CZ(e_-)=3$. By Lemma \ref{lem1}, we know  that $|\gamma|=2$ if and only if $\gamma=e_-$.  Since  $c_1(X, \omega_{\varepsilon})<L$, we have  $$\epsilon_X\langle p \rangle(e_-)=\#^{vir} \overline{\mathcal{M}_X^{J_{\varepsilon}}}(e_-; \emptyset)\langle p \rangle \ne 0.$$ 
Fix a point $q \in Y$ away from the fibers at critical points of $H$. Let $\eta: [0, \infty)_{\tau} \to \widehat{X}$ be a path such that  $\eta(0)=p$ and  $\eta(\tau) =q \in   Y$ for  sufficient large  $\tau$. 
Let $ \overline{\mathcal{M}_X^{J_{\varepsilon}}}(e_-; \emptyset)\langle \eta \rangle$ be the moduli space of holomorphic planes   passing through $\eta$, i.e., $\overline{\mathcal{M}_X^{J_{\varepsilon}}}(e_-; \emptyset)\langle \eta \rangle : =ev^{-1}(\eta)$ and $ev:  \overline{\mathcal{M}_{X, 1}^{J_{\varepsilon}}}(e_-; \emptyset) \to \widehat{X}$ is the evaluation map. Note that $$dim \overline{\mathcal{M}_X^{J_{\varepsilon}}}(e_-; \emptyset)\langle \eta \rangle=-1+CZ(e_-) +2+ dim \eta -4 =1.$$Therefore, we have 
\begin{equation*}
\begin{split}
 0=& \#^{vir} \partial \overline{\mathcal{M}_X^{J_{\varepsilon}}}(e_-; \emptyset)\langle \eta \rangle\\
 =&\#^{vir} \overline{\mathcal{M}_X^{J_{\varepsilon}}}(e_-; \emptyset)\langle p \rangle  - \sum_{\gamma_1,...,\gamma_k}  \#^{vir}\left(\overline{\mathcal{M}_Y^{J_{\varepsilon}}}(e_-; \gamma_1,...,\gamma_k)/\mathbb{R}\langle q \rangle\right)\epsilon_X(\gamma_1), ...,\epsilon_X(\gamma_k).
 \end{split}
\end{equation*}
Since $e_-$ has minimal contact action and by degree reason, we have $k=1$ and $\gamma_1=e_-$. Therefore, the only holomorphic curve in   $\overline{\mathcal{M}_Y^{J_{\varepsilon}}}(e_-; e_-)/\mathbb{R}\langle q \rangle$ is the trivial cylinder. However, it cannot pass through the point $q$ due to our choice.  Thus, the moduli space  $\overline{\mathcal{M}_Y^{J_{\varepsilon}}}(e_-; e_-)/\mathbb{R}\langle q \rangle$  is empty, and hence $\epsilon_X\langle p \rangle(e_-)=\#^{vir} \overline{\mathcal{M}_X^{J_{\varepsilon}}}(e_-; \emptyset)\langle p \rangle=0$. This also contradicts the finitness of $c_1(X, \omega_{\varepsilon})$.

Finally, we remark that the above argument can be done without using any virtual techniques, because, one can show that the holomorphic curves  appeared in the above moduli spaces  are simple. 
\end{proof}


By Lemmas  \ref{lem1}, \ref{lem2} and \ref{lem7},   $|\gamma|=2$ if and only if $\gamma =e_+$ or $\gamma=e_-^2$.  The case that $\gamma=e_-^2$ is ruled  out by the  same argument as in Lemma \ref{lem7}. 

\begin{lemma} \label{lem3}
We have  $\epsilon_X\langle p \rangle (e_-^2)=\#^{vir} \overline{\mathcal{M}_X^{J_{\varepsilon}}}(e_-^2; \emptyset)\langle p \rangle =0$.
\end{lemma}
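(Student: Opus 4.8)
The plan is to run the path--degeneration argument used in the proof of Lemma~\ref{lem7}. By Lemma~\ref{lem7} we have $CZ(e_-)=1$, so Lemma~\ref{lem1} gives $CZ(e_-^2)=3$ and hence $|e_-^2|=2$; moreover the only Reeb orbit $\gamma$ with $|\gamma|=0$ (i.e.\ $CZ(\gamma)=1$) is the simple orbit $e_-$, so $\epsilon_X(\gamma)=0$ for every other orbit, and in particular $\epsilon_X(e_-^2)=0$ because $|e_-^2|=2\ne 0$. Fix a point $q\in Y$ lying off the fibres over the critical points of $H$, together with a path $\eta: [0,\infty)_{\tau}\to\widehat{X}$ with $\eta(0)=p$ and $\eta(\tau)=q$ for all large $\tau$, exactly as in Lemma~\ref{lem7}. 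Then $\dim\overline{\mathcal{M}_X^{J_\varepsilon}}(e_-^2;\emptyset)\langle\eta\rangle=1$ (by the dimension formula, just as in Lemma~\ref{lem7}), so $\#^{vir}\partial\,\overline{\mathcal{M}_X^{J_\varepsilon}}(e_-^2;\emptyset)\langle\eta\rangle=0$.

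Next I would read off the codimension--one boundary strata, just as in Lemma~\ref{lem7}. One of them contributes $\#^{vir}\overline{\mathcal{M}_X^{J_\varepsilon}}(e_-^2;\emptyset)\langle p\rangle=\epsilon_X\langle p\rangle(e_-^2)$, and the remaining ones contribute the breaking terms
\[
\sum_{\gamma_1,\dots,\gamma_k}\#^{vir}\!\left(\overline{\mathcal{M}_Y^{J_\varepsilon}}(e_-^2;\gamma_1,\dots,\gamma_k)\big/\mathbb{R}\,\langle q\rangle\right)\,\epsilon_X(\gamma_1)\cdots\epsilon_X(\gamma_k).
\]
A summand can be nonzero only when $\epsilon_X(\gamma_i)\ne 0$ for every $i$, which by the first paragraph forces $\gamma_i=e_-$ for all $i$; the alternative $k=1,\ \gamma_1=e_-^2$ is killed by $\epsilon_X(e_-^2)=0$. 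By the degree formula of Nelson--Weiler recalled above, a rational curve in $\mathbb{R}\times Y$ has equal total covering multiplicities along its positive and its negative ends (the pieces of the broken configuration are rational, the original curve being a plane), and here this common multiplicity is $2$; hence $k=2$ and $\gamma_1=\gamma_2=e_-$.

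It then remains to show that $\overline{\mathcal{M}_Y^{J_\varepsilon}}(e_-^2;e_-,e_-)\big/\mathbb{R}\,\langle q\rangle=\emptyset$. Since $e_-=\pi^{-1}(p_-)$ with $p_-$ a critical point of $H$, we have $\mathcal{A}_{\lambda_\varepsilon}(e_-^k)=k(1+\varepsilon H(p_-))$, so every $u\in\overline{\mathcal{M}_Y^{J_\varepsilon}}(e_-^2;e_-,e_-)$ has vanishing $d\lambda_\varepsilon$--energy:
\[
\int u^*d\lambda_\varepsilon=\mathcal{A}_{\lambda_\varepsilon}(e_-^2)-2\,\mathcal{A}_{\lambda_\varepsilon}(e_-)=0.
\]
A $J_\varepsilon$--holomorphic curve in a symplectization with vanishing $d\lambda_\varepsilon$--energy has image contained in $\mathbb{R}$ times a union of closed Reeb orbits; since $u$ is connected with all ends asymptotic to iterates of $e_-$, its image lies in the trivial cylinder $\mathbb{R}\times e_-$. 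As $q\notin e_-=\pi^{-1}(p_-)$ by the choice of $q$, no such $u$ passes through $q$, so the moduli space is empty and every breaking term vanishes. The boundary identity then gives $\#^{vir}\overline{\mathcal{M}_X^{J_\varepsilon}}(e_-^2;\emptyset)\langle p\rangle=0$, i.e.\ $\epsilon_X\langle p\rangle(e_-^2)=0$. As in Lemma~\ref{lem7}, the curves occurring here are either simple or the conclusion is one of emptiness, so no genuine virtual perturbation is needed.

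I expect the main subtlety to be the $k=2$ stratum. Because the action balance $\mathcal{A}_{\lambda_\varepsilon}(e_-^2)=2\,\mathcal{A}_{\lambda_\varepsilon}(e_-)$ is an \emph{equality} rather than a strict inequality, the curves in $\overline{\mathcal{M}_Y^{J_\varepsilon}}(e_-^2;e_-,e_-)$ need not be trivial cylinders: the moduli space genuinely contains branched multiple covers of $\mathbb{R}\times e_-$ (for instance a pair of pants covering $\mathbb{R}\times e_-$ twice), for which a transversality--based dimension count is unavailable. What rescues the argument is that genericity of $q$ excludes all of them at once, since their image is confined to $\mathbb{R}\times e_-$. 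One must also verify, exactly as in the proof of Lemma~\ref{lem7}, that the two families of strata above exhaust the codimension--one boundary of the path--constrained moduli space; in particular any stratum whose constrained plane lies in $\widehat{X}$ would force a negative--energy level in $\mathbb{R}\times Y$, because $e_-$ strictly minimizes the contact action, and therefore does not occur.
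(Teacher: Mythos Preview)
Your proof is correct and follows essentially the same path--degeneration argument as the paper's proof. The only minor difference is in how you eliminate the possible negative ends $\gamma_i$: you use the grading constraint $|\gamma_i|=0$ on the augmentation $\epsilon_X$, whereas the paper uses energy and degree directly (narrowing to $k=1,\gamma_1=e_-^2$ or $k=2,\gamma_1=\gamma_2=e_-$ and then killing both cases by the zero--energy argument); either route is valid and the remainder of the argument is identical.
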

\begin{proof}
Reintroduce the point $q\in Y$ and the path $\eta$ in Lemma \ref{lem7}. 
Then the virtual dimension of $\overline{\mathcal{M}_X^{J_{\varepsilon}}}(e_-^2; \emptyset)\langle \eta \rangle $ is $1$. Therefore, we have 
\begin{equation*}
\begin{split}
 0=& \#^{vir} \partial \overline{\mathcal{M}_X^{J_{\varepsilon}}}(e_-^2; \emptyset)\langle \eta \rangle\\
 =&\#^{vir} \overline{\mathcal{M}_X^{J_{\varepsilon}}}(e_-^2; \emptyset)\langle p \rangle  -  \sum_{\gamma_1,...,\gamma_k} \#^{vir}\left(\overline{\mathcal{M}_Y^{J_{\varepsilon}}}(e_-^2; \gamma_1,...,\gamma_k)/\mathbb{R}\langle q \rangle\right)\epsilon_X(\gamma_1), ...,\epsilon_X(\gamma_k).
 \end{split}
\end{equation*}
Note that $\mathcal{A}_{\lambda_{\varepsilon}}(e_-)<\mathcal{A}_{\lambda_{\varepsilon}}(h_i)<\mathcal{A}_{\lambda_{\varepsilon}}(e_+)$. By energy and degree reasons, $ 1\le  k\le 2$,  and   $\gamma_i$ is either $e_-$, $e_-^2$, $h_i$ or $e_+$.  Also, if there exists $\gamma_i$ such that   $\gamma_i=h_j$ or $\gamma_i=e_+$, then $k=1$. Therefore, the existence of $h_j$ or $e_+$ is  ruled out by the degree reasons. The only possibility is that $k=1$ and $\gamma_1=e_-^2$,  or $k=2$ and $\gamma_1=\gamma_2=e_-$. 
However,  the  energy of the holomorphic curves in  $\overline{\mathcal{M}_Y^{J_{\varepsilon}}}(e_-^2; e_-^2)/\mathbb{R}\langle q \rangle$ or $ \overline{\mathcal{M}_Y^{J_{\varepsilon}}}(e_-^2; e_-, e_-)/\mathbb{R}\langle q \rangle
$
are zero.  Hence, the image of curves are contained in $\mathbb{R} \times e_-$. By our choice of $q$,  $\overline{\mathcal{M}_Y^{J_{\varepsilon}}}(e_-^2; \gamma_1,..,\gamma_k)/\mathbb{R}\langle q \rangle=\emptyset.$  Therefore, we have $$ 0=\#^{vir} \partial \overline{\mathcal{M}_X^{J_{\varepsilon}}}(e_-^2; \emptyset)<\eta> =\#^{vir} \overline{\mathcal{M}_X^{J_{\varepsilon}}}(e_-^2; \emptyset)\langle p \rangle.$$

\end{proof}

Lemma \ref{lem3} and $c_1(X, \omega_{\varepsilon})<L  $ imply  that 
\begin{equation*}
\epsilon_X\langle p \rangle (e_+) = \#^{vir}\overline{\mathcal{M}_X^{J_{\varepsilon}}}(e_+; \emptyset)\langle p \rangle \ne 0.
\end{equation*}

\begin{lemma} \label{lem4}
For a generic $J_{\varepsilon},$ the moduli space $\mathcal{M}_X^{J_{\varepsilon}}(e_+; \emptyset)\langle p \rangle$ is compact and Fredholm regurlar.  In particular, we get 
 \begin{equation*}
 \# {\mathcal{M}_X^{J_{\varepsilon}}}(e_+; \emptyset)\langle p \rangle= \#^{vir}\overline{\mathcal{M}_X^{J_{\varepsilon}}}(e_+; \emptyset)\langle p \rangle \ne 0.
 \end{equation*}
 
\end{lemma}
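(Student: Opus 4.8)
The plan is to establish compactness by an SFT-compactness argument combined with the index/action bookkeeping already set up, and then to obtain Fredholm regularity by a standard genericity statement for simple curves, checking that the only relevant curves are indeed simple. First I would record the index count: since $\operatorname{ind} u = (n-3)\chi(u) + 2c_\tau(u^*TX) + \mu_\tau(e_+) = -1 + 2c_\tau(u^*TX) + 1 = 2c_\tau(u^*TX)$, the condition $\dim\overline{\mathcal{M}_X^{J_\varepsilon}}(e_+;\emptyset)\langle p\rangle = 0$ forces $c_\tau(u^*TX)=1$ and $\operatorname{ind}u = 2$ for any plane $u$ in the moduli space.

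For compactness, suppose $u_k$ is a sequence in $\mathcal{M}_X^{J_\varepsilon}(e_+;\emptyset)\langle p\rangle$ converging in the SFT sense to a holomorphic building $\mathbf{u} = \{u^{(0)},\dots,u^{(N)}\}$. No sphere bubbles form, by the aspherical assumption \ref{T2}. The top levels live in $\mathbb{R}\times Y$ and, since $e_+$ has the largest contact action among the simple orbits $e_\pm, h_i$ of $\lambda_\varepsilon$ below $L$, the only available configuration on the positive side is governed by the degree identity $M=N$ of Nelson--Weiler: any level in $\mathbb{R}\times Y$ with a single positive end asymptotic to $e_+$ must have negative ends whose total fiber multiplicity equals $1$, hence a single negative end at $e_+$ — so each such level is either a trivial cylinder over $e_+$ (excluded in the limit building as a nonconstant level) or must carry the marked point constraint. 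A dimension count shows there is no room for breaking: any nontrivial splitting into a curve in $\mathbb{R}\times Y$ with positive end $e_+$ and negative end $e_+$ (index $\ge 1$ after quotienting by $\mathbb{R}$, but total fiber multiplicity considerations plus $c_1(X,\omega_\varepsilon)<L$ prevent additional connector components contributing augmentation factors $\epsilon_X(\gamma_i)$ with $|\gamma_i|$ too small) would leave a bottom-level plane of index $<2$ still required to pass through $p$, which is impossible. Concretely, the augmentation planes $\overline{\mathcal{M}_X^{J_\varepsilon}}(\gamma_i;\emptyset)$ that could appear are nonempty only for $\gamma_i$ with $|\gamma_i|=0$, i.e.\ $CZ(\gamma_i) = 4-2n+\dots$; by Lemmas \ref{lem1}--\ref{lem7} with $CZ(e_-)=1$, no simple orbit below $L$ has the required index, so no such factors appear, and the building cannot break at all. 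Hence $\mathbf{u} = u^{(0)} \in \mathcal{M}_X^{J_\varepsilon}(e_+;\emptyset)\langle p\rangle$ and the moduli space is compact.

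For regularity, I would first argue every $u \in \mathcal{M}_X^{J_\varepsilon}(e_+;\emptyset)\langle p\rangle$ is somewhere injective: a multiply covered plane would factor through a plane asymptotic to a proper divisor of $e_+$, but $e_+$ is simple, so the only alternative is that $u$ is constant — impossible since it has a nonremovable puncture at $e_+$. Being simple and asymptotic to the single simple nondegenerate orbit $e_+$, the standard transversality theorem (e.g.\ via generic cobordism-admissible $J_\varepsilon$) makes $\mathcal{M}_X^{J_\varepsilon}(e_+;\emptyset)$ a manifold of dimension $\operatorname{ind}u = 2$, cut out transversally; imposing the point constraint $\langle p\rangle$ for generic $p$ (using that the evaluation map is a submersion on the open stratum, which follows from $J_\varepsilon$-genericity together with the automatic transversality or direct computation in this low-index setting) then makes $\mathcal{M}_X^{J_\varepsilon}(e_+;\emptyset)\langle p\rangle$ a compact $0$-manifold, Fredholm regular. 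Combined with compactness, the axioms for the virtual count give $\#\mathcal{M}_X^{J_\varepsilon}(e_+;\emptyset)\langle p\rangle = \#^{vir}\overline{\mathcal{M}_X^{J_\varepsilon}}(e_+;\emptyset)\langle p\rangle$, which is nonzero by the preceding discussion.

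The main obstacle I anticipate is the compactness step: one must carefully rule out all SFT-degenerations, and the cleanest way is to verify — using Lemmas \ref{lem1}, \ref{lem2}, \ref{lem7} (so $CZ(e_-)=1$) together with the degree/fiber-multiplicity identity $M=N$ of \cite{NW} and the action ordering $\mathcal{A}_{\lambda_\varepsilon}(e_-)<\mathcal{A}_{\lambda_\varepsilon}(h_i)<\mathcal{A}_{\lambda_\varepsilon}(e_+)$ — that no augmentation planes $\overline{\mathcal{M}_X^{J_\varepsilon}}(\gamma_i;\emptyset)$ with the right index exist and that no $\mathbb{R}$-level breaking is compatible with the index budget; this is the same style of bookkeeping used in Lemmas \ref{lem7} and \ref{lem3}, and as remarked there it can be carried out without virtual perturbation machinery because all curves in play are simple.
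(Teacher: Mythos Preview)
Your regularity argument is fine, but the compactness step has genuine gaps.

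First, the claim ``hence a single negative end at $e_+$'' is false. The Nelson--Weiler degree identity $M=N$ only forces the negative ends of a symplectization level with positive end $e_+$ to have total fiber multiplicity $1$; this means a \emph{single simple} negative end, but that orbit can be $e_-$, $h_i$, or $e_+$, not just $e_+$. Consequently the building is a chain of cylinders in $\mathbb{R}\times Y$ (each with one simple positive and one simple negative end) capped by a single plane in $\widehat{X}$---but those cylinders need not be trivial at this stage.

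Second, your appeal to ``augmentation factors $\epsilon_X(\gamma_i)$'' and the claim that ``no simple orbit below $L$ has the required index $|\gamma_i|=0$'' are both misplaced. The former conflates the SFT compactification of $\overline{\mathcal{M}_X^{J_\varepsilon}}(e_+;\emptyset)\langle p\rangle$ with the algebraic structure of the linearized differential; once you know the building is a chain, there are no extra planes in the cobordism level to play the role of augmentation factors. The latter is simply false: with $CZ(e_-)=1$ from Lemma~\ref{lem7} one has $|e_-|=CZ(e_-)-1=0$.

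The paper's argument is shorter and avoids these issues. From the chain structure and the fact that every end is a simple orbit, each level is a \emph{simple} irreducible curve. Generic transversality then gives $\operatorname{ind} u_0\ge 2$ for the bottom plane (it carries the point constraint) and $\operatorname{ind} u_i\ge 1$ for every nontrivial symplectization cylinder (after dividing by the $\mathbb{R}$-action). Index additivity forces $\operatorname{ind} u_0=2$ and $\operatorname{ind} u_i=0$ for $i\ge 1$, so the symplectization levels are trivial cylinders, excluded by stability. That is the whole compactness proof; you should replace your augmentation/grading bookkeeping with this direct index argument.
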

\begin{proof}
Let $\mathbf{u}=\{u_0, ..., u_N\} \in \overline{\mathcal{M}_X^{J_{\varepsilon}}}(e_+; \emptyset)\langle p \rangle$ be a holomorphic building. No bubbles can   appear due to the aspherical assumption \ref{T2}. 
 By energy and degree reasons, the negative end of the top level $u_N$ is asymptotic to $h_i$ or $e_-$.  By the same reasons, the negative ends  of $u_j$ are simple orbits ($e_-$ or $h_i$) for $1\le  j \le N$. In particular, $u_j $ are  simple  irreducible curves for $0\le j\le N$. 

  Since $u_0$ passes  through the marked point $p$, we have $ind u_0 \ge 2$. By the additivity property of the Fredholm index, $ind u_0=2$ and $ind u_i=0$ for $i \ge 1$. Therefore, $u_i$ are trivial cylinders which are ruled out by the stability condition.   This shows that $\mathcal{M}_X^{J_{\varepsilon}}(e_+; \emptyset)\langle p \rangle =  \overline{\mathcal{M}_X^{J_{\varepsilon}}}(e_+; \emptyset)\langle p \rangle$, and hence  it  is compact.  
Since $J_{\varepsilon}$ is generic,   $\mathcal{M}_X^{J_{\varepsilon}}(e_+; \emptyset)\langle p \rangle$  is a manifold of expected dimension. 
(In fact, we know that $u_0$ is embedded in the next lemma. By Wendl's automatic   transversality  theorem \cite{Wen2} and $2= ind u_0>-2 + \#\Gamma_0(u_0) +2Z(du_0) =-2$,  $\mathcal{M}_X^{J_{\varepsilon}}(e_+; \emptyset)\langle p \rangle$  is still  a manifold  without requiring that $J_{\varepsilon}$ is    generic.)
\end{proof}

\paragraph{Notation} Note that  $\mathcal{M}_X^{J_{\varepsilon}}(e_+; \emptyset)\langle p \rangle $ may be disconnect. By Lemma \ref{lem4},  there is a nonempty connected component. Such   a nonempty connected component is denoted by  $\mathcal{M}_{X}^{J_{\varepsilon}}(e_+; \emptyset)_{\star}\langle p \rangle.$ 

\begin{lemma}  \label{lem5}
The moduli space $\mathcal{M}_{X}^{J_{\varepsilon}}(e_+; \emptyset)_{\star}\langle p \rangle$ consists of a single embedded holomorphic plane. 
\end{lemma}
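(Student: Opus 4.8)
**Proof plan for Lemma 3.10 (the moduli space $\mathcal{M}_X^{J_\varepsilon}(e_+;\emptyset)_\star\langle p\rangle$ consists of a single embedded plane).**

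The plan is to exploit the intersection theory of punctured holomorphic curves in dimension four, exactly in the McDuff–Wendl style. First I would show that every curve $u \in \mathcal{M}_X^{J_\varepsilon}(e_+;\emptyset)_\star\langle p\rangle$ is embedded. Since $u$ is simple (established in Lemma 3.9 / Lemma \ref{lem4}), has index $2$, genus $0$, and a single positive puncture asymptotic to the simple orbit $e_+$, I apply the adjunction formula \ref{F2}. The orbit $e_+$ is elliptic, so its Conley–Zehnder index is odd and $\#\Gamma_0(u) = 0$; moreover $e_+$ lies in a Morse–Bott family of minimal contact action, so by \ref{F4} the covering contributions $cov_\infty(e_+) = cov_{MB}(e_+) = 0$, and since $u$ is asymptotic to a single simple nondegenerate orbit, $\delta_\infty(u) = 0$ by \ref{F3}. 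Thus
\[
u \bullet u = 2\delta(u) + \tfrac{1}{2}\bigl(\mathrm{ind}\,u - 2 + 2g(u) + \#\Gamma_0(u)\bigr) = 2\delta(u) + 0.
\]
On the other hand, I compute $u \bullet u$ directly using the degree/multiplicity bookkeeping of Nelson–Weiler together with the fact that $i_*: H_2(Y;\mathbb{Z}) \to H_2(X;\mathbb{Z})$ is zero (the Lemma right before Section 3.1) and the negative-definiteness \ref{T4}: writing $|e|[u] = Z + A$ with $Z$ the standard $2$-chain of Nelson–Weiler (for which $Q_\tau(Z) = |e|$) and $A \in H_2(X;\mathbb{Z})$ representable away from $Y$, so $Z\cdot A = 0$, one gets $|e|^2 Q_\tau(u) = |e| + A\cdot A \le |e|$, hence $Q_\tau(u) \le 1/|e| < 1$, forcing $u\bullet u \le 0$. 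Combined with $u\bullet u = 2\delta(u) \ge 0$ this gives $u\bullet u = 0$ and $\delta(u) = 0$, i.e. $u$ is embedded.

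Next I would prove uniqueness within the connected component. Take two curves $u, u' \in \mathcal{M}_X^{J_\varepsilon}(e_+;\emptyset)_\star\langle p\rangle$. They are both simple and both pass through $p$, so they cannot be disjoint; hence $u \bullet u' > 0$ unless $u$ and $u'$ have the same image. But $u \bullet u'$ is a homotopy invariant (Siefring/Wendl), and since $u$ and $u'$ lie in the same connected component of the moduli space they are homotopic through such curves, so $u\bullet u' = u \bullet u = 0$. By \ref{F1}, $u\bullet u' = 0$ forces $u$ and $u'$ to be disjoint — contradiction — unless they have the same image, in which case simplicity (degree-one parametrization) makes them equal up to reparametrization, i.e. equal as elements of the moduli space. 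Finally, since $\mathcal{M}_X^{J_\varepsilon}(e_+;\emptyset)_\star\langle p\rangle$ is a $0$-dimensional manifold (Lemma \ref{lem4}) that is nonempty and, by the argument just given, has exactly one element, the lemma follows.

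The main obstacle I anticipate is the homotopy-invariance step used for uniqueness: one must be sure that two curves in the same connected component of the moduli space are connected by a path of simple curves along which the Siefring intersection number $u\bullet u'$ is genuinely locally constant — this requires that no curve in the family degenerates (which is guaranteed here by the compactness statement of Lemma \ref{lem4}, ruling out breaking and bubbling) and that the asymptotic data stay fixed (true, since all curves in the component are asymptotic to the same orbit $e_+$ with the same multiplicity one). A secondary subtlety is making sure the relative intersection form computation is carried out with respect to the fixed trivialization $\tau$ and that the identity $Q_\tau(Z) = |e|$ of Nelson–Weiler is applied with the correct orientation conventions; but this is the same computation already sketched in the commented-out portion of Lemma \ref{lem7}, so it should go through verbatim.
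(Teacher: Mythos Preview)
Your approach is essentially the paper's: show $u$ is embedded via adjunction and the Nelson--Weiler computation of $Q_\tau$, then use homotopy invariance of $u\bullet u'$ together with $u\bullet u=0$ to force uniqueness through the common point $p$. The uniqueness half matches the paper verbatim.

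There is one soft spot in the embeddedness half. You compute $Q_\tau(u)\le 1/|e|<1$ and then write ``forcing $u\bullet u\le 0$'', but you never justify that passage. What you have actually bounded is $Q_\tau(u)$, not $u\bullet u$; the link $u\bullet u=Q_\tau(u)-\Omega^\tau_+(e_+-\epsilon,e_+-\epsilon)$ still needs the computation $\Omega^\tau_+(e_+-\epsilon,e_+-\epsilon)=0$ (which holds because $e_+$ is nondegenerate with $\mu_\tau(e_+)=1$, so $\alpha^\tau_-(e_+-\epsilon)=\lfloor 1/2\rfloor=0$). The paper sidesteps this by using the Hutchings relative adjunction formula $c_\tau(u^*TX)=\chi(u)+Q_\tau(u)+w_\tau(u)-2\delta(u)$ first: since $c_\tau=1$, $\chi=1$, and $w_\tau=0$ (simple orbit), one gets $Q_\tau(u)=2\delta(u)$ directly, and then the bound $Q_\tau(u)\le 1/|e|$ immediately gives $\delta(u)=0$. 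Only afterwards does the paper invoke the Siefring adjunction to read off $u\bullet u=0$. Either route works; just make the missing identity explicit.

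A minor phrasing issue: you say ``$e_+$ lies in a Morse--Bott family of minimal contact action'', but in the perturbed picture $e_+$ is nondegenerate and in fact has the \emph{largest} action among the simple orbits. What you need from \ref{F3}, \ref{F4} is only that $e_+$ is a simple nondegenerate orbit, which already gives $\delta_\infty(u)=\mathrm{cov}_\infty(e_+)=\mathrm{cov}_{MB}(e_+)=0$.
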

\begin{proof}
 First, we show that every  holomorphic plane  $u \in \mathcal{M}_{X}^{J_{\varepsilon}}(e_+; \emptyset)_{\star}\langle p \rangle$ is embedded. 
Since $ind u = 2c_{\tau}(u^* TX) =2$, we have  $c_{\tau}(u^* TX) =1$.  
By the relative adjunction formula \cite{H2}, we have 
\begin{equation}  \label{eq2}
\begin{split}
1=c_{\tau}(u^*TX)= &\chi(u)+Q_{\tau}(u) +w_{\tau}(u) -2\delta(u) \\
=&1+Q_{\tau}(u) -2\delta(u),
\end{split}
\end{equation}
where $w_{\tau}(u)$ is the  asymptotic writhe of $u$ (see \cite{H2} for its definition).  Here we use a fact  that $w_{\tau}(u)=0$ if the ends of  $u$ are asymptotic to distinct simple Reeb orbits. 

By Lemma 3.7 of \cite{NW},  $[e^{|e|}_+]=0 \in H_1(Y, \mathbb{Z})$. Thus, it bounds  a 2-chain $Z \in H_2(Y, e_+^{|e|}; \emptyset)$.   By Lemma 3.13 of \cite{NW}, we have $Q_{\tau}(Z) =|e|$.
  We regard $Z$ as a 2-chain  in $H_2(X, e_+^{|e|}; \emptyset)$.  Because   $H_2(X, e_+^{|e|}; \emptyset)$ is an affine space over $H_2(X, \mathbb{Z}), $ there is a  class $A\in H_2(X, \mathbb{Z})$ such that $|e|[u] = Z+A \in H_2(X, e_+^{|e|}; \emptyset)$.  Since $i_*:  H_2(Y, \mathbb{Z}) \to H_2(X, \mathbb{Z})$ is the zero map, we can represented $A$ by a 2-chain away from  $Y$. In particular,  $Z \cdot A =0$. 
By the definition of $Q_{\tau}$, we have 
\begin{equation} \label{eq3}
\begin{split}
|e|^2Q_{\tau}(u) &= Q_{\tau}(|e|u) = Q_{\tau}(Z+A) \\
&= Q_{\tau}(Z)+A\cdot A +2Z \cdot A\\
&= |e| +A\cdot A \le |e|  \ \ (\mbox{by assumption \ref{T4}}). 
 \end{split}
\end{equation}
Equations (\ref{eq2}), (\ref{eq3}) imply that  $2\delta(u) \le 1/|e| \le 1$. Therefore,  $\delta(u) =0$  and    $u$ is embedded.   Moreover, from the above estimates we see that $Q_{\tau}(u)=0$, and the class $A$ satisfies $A\cdot A =e$. 

By  the adjunction formula \cite{S},  we have 
\begin{equation*}
u\bullet u=  2(\delta_{\infty}(u) + \delta(u))+ \frac{1}{2}( ind u -2   +  \# \Gamma_0(u))  +  cov_{\infty}(e_+)=0. 
\end{equation*}
Here $\delta_{\infty}(u) = cov_{\infty}(e_+) =0$ follows from the facts \ref{F3} and \ref{F4}. 
Suppose that we have another holomorphic plane $ v \in \mathcal{M}_{X}^{J_{\varepsilon}}(e_+, \emptyset)_{\star}\langle p \rangle$ other than $u$. Since  $u$ and $v$  in the same component of the moduli space, they are homotopic to each other. Since the intersection number $\bullet$ is homotopic invariant,  $v\bullet  u =u\bullet  u =0$. By \ref{F1},  $v$ and $u$ are disjoint.  However, $p \in v\cap u$. We get a contradiction.  
\end{proof}

\begin{remark}
By degree and energy reason, the moduli space $\overline{\mathcal{M}_Y^{J_{\varepsilon}}}(e_+; \gamma_1,...,\gamma_k)/\mathbb{R}$ is nonempty only when $k=1$ and $\gamma_1$ is $h_i$ or $e_-$. If we require the virtual dimension of  $ \overline{\mathcal{M}_Y^{J_{\varepsilon}}}(e_+; \gamma_1)/\mathbb{R}$  is zero, then $\gamma_1= h_i$.  By Proposition 4.7 of  \cite{NW}, the holomorphic curves in  $ {\mathcal{M}_Y^{J_{\varepsilon}}}(e_+; h_i)$ are 1-1 corresponding to Morse flow lines of $H$.  Since $H$ is perfect, we have $\partial e_+=0$. 

Since $e_+$ is the only Reeb orbit such that $\mathcal{A}_{\lambda_{\varepsilon}}<L$ and $\varepsilon_X\langle p \rangle (e_+) \ne 0$, we have
 $$c_1(X, \omega_{\varepsilon})=\mathcal{A}_{\omega_{\varepsilon}}(e_+)=\int u^* 
 \check{\omega}_{\varepsilon} = 1+ \varepsilon H(p_+) + \omega_{\varepsilon}(A)/|e|,$$
 where $u \in \mathcal{M}_{X}^{J_{\varepsilon}}(e_+; \emptyset)_{\star}\langle p \rangle$ and $A$ is the class in Lemma \ref{lem5}. 
\end{remark}

 
\subsection{Morse-Bott situation}
In this subsection, we consider the holomorphic curves in $(\widehat{X}, \hat{\omega})$. Now $(Y, \lambda)$ is Morse-Bott. The fibers and their iteration are the only Reeb orbits. 

Fix a generic  cobordism admissible almost complex structure $J$ such that $J_{\varepsilon}$ converges to $J$ in $C^{\infty}$-topology as $\varepsilon \to 0$. Let $\mathcal{M}_{X, m}^J(\Sigma)$ denote  the moduli space of holomorphic planes with  $m$ marked points   and the puncture  is asymptotic to a  fiber  $\gamma_z$ over $\Sigma$.  The punctures   of  the curves  are  unconstrained  in the sense of \cite{Wen2}. By Corollary 5.4 of \cite{BO}, the   virtual  dimension of $\mathcal{M}_{X, m}^J(\Sigma)$ is 
\begin{equation*}
\begin{split}
dim \mathcal{M}_{X, m}^J(\Sigma)= &ind u+2m\\
=& -1 + 2c_{\tau}(u^*TX) + \mu_{RS}^{\tau}(\gamma_z) + 1 +2m,\\
\end{split}
\end{equation*}
where $\mu_{RS}^{\tau} $ is the Robbin-Salamon index.  According to Lemma 3.9 of \cite{NW}, $\mu_{RS}^{\tau}(\gamma_z^k) =0$.  By the computations in Lemma \ref{lem5} and Remark \ref{remark1}, we have $c_{\tau}(u^*TX) =1$. Hence, 
$$dim \mathcal{M}_{X, m}^J(\Sigma)=  2c_{\tau}(u^*TX)+2m=2m+2.$$

\begin{remark} \label{remark1}
 Let $u_{\varepsilon} \in \mathcal{M}_X^{J_{\varepsilon}}(e_+, \emptyset)_{\star}\langle p \rangle$  denote the holomorphic plane in Lemma \ref{lem5}.  We know that $c_{\tau}(u_{\varepsilon} ^*TX) =1$ and $Q_{\tau}(u_{\varepsilon} )=0$.  Let  $u_{\varepsilon} \#u_z$,   where $u_z :=\pi^{-1}(\eta)$ and $\eta : [0,1] \to \Sigma$ is a path from $p_+$ to $z$.   For any $u \in \mathcal{M}_{X}^J(\Sigma)$,   by assumption \ref{T2}, we have  
 $c_{\tau}(u^*TX)=c_{\tau}((u_{\varepsilon}\#u_z) ^*TX)= c_{\tau}(u_{\varepsilon}^*TX)=1$. 
If $u$ and $u_{\varepsilon}\#u_z$ represent the same relative homology class, then we also have 
$Q_{\tau}(u)=Q_{\tau}(u_{\varepsilon}) +Q_{\tau}(u_z) =Q_{\tau}(u_{\varepsilon})=0$.   
\end{remark}

\begin{definition} [Definition 4.12, 4.13, 4.14 of \cite{BO}]
A   Morse-Bott building in $\widehat{X}$ is a  chain of rational holomorphic curves $\mathbf{u} =\{u_0,..., u_N\}$ together with a  sequence  of    numbers  $\{T_i \in (0, \infty]\}_{i=0}^{N-1}$, where  $u_0$ is the main level in $\widehat{X}$. $u_i$ satisfy the following conditions.  
\begin{itemize}
\item
For any two adjacent curves $u_i$ and $u_{i+1}$,  the negative ends of $u_{i+1}$ are paired with positive ends of $u_i$. 
Let $(\gamma_{i, j}^+, \gamma^{-}_{i+1, j'})$ be a pair of Reeb orbits.  Then the gradient flow $\varphi_H^{T_i}$ sends $\gamma_{i, j}^+$ to $\gamma^{-}_{i+1, j'}$, where $\varphi_H^t$ is the flow $\partial_t \varphi_H^{t} = \nabla H \circ \varphi_H^{t}. $

\item
Each irreducible component of $u_i$ is either has positive energy or its domain is stable, or $u_i$ contains at least one nonconstant  Morse flow line.  
\end{itemize}
The positive  asymptotics  of  $\mathbf{u}$  are the Reeb  orbits $\varphi_H^{\infty}(\gamma_{N, j}^+)$. 
\end{definition}


\begin{lemma} \label{lem6}
There is a connected component  $\mathcal{M}_{X, 1}^J(\Sigma)_{\star}$ of  $\mathcal{M}_{X, 1}^J(\Sigma)$ such that  evaluation map  $ev: \mathcal{M}_{X, 1}^J(\Sigma)_{\star} \to \widehat{X}$  is a diffeomorphism. 
\end{lemma}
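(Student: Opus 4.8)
\textbf{Proof plan for Lemma \ref{lem6}.}

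The plan is to promote the single embedded plane $u_\varepsilon \in \mathcal{M}_X^{J_\varepsilon}(e_+;\emptyset)_\star\langle p\rangle$ from Lemma \ref{lem5} to a Morse--Bott limit $u_0 \in \mathcal{M}_{X,1}^J(\Sigma)$ as $\varepsilon \to 0$, and then show that the connected component $\mathcal{M}_{X,1}^J(\Sigma)_\star$ containing $u_0$ is a smooth two-dimensional manifold on which $\mathrm{ev}$ is a proper local diffeomorphism, hence a covering map, and finally that this covering has degree one. First I would run the SFT/Morse--Bott compactness of \cite{BO} on the sequence $u_\varepsilon$: no bubbling occurs by the aspherical assumption \ref{T2}, and by the degree formula $\deg(u)=(M-N)/|e|$ together with energy bounds, any multi-level Morse--Bott building limit must have all non-main levels trivial (trivial cylinders with possibly a flow line), so the limit is a single plane $u_0$ asymptotic to a fiber $\gamma_z$, with $c_\tau(u_0^*TX)=1$ and $Q_\tau(u_0)=0$ as recorded in Remark \ref{remark1}. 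The same adjunction computation as in Lemma \ref{lem5}, now with $\mu_{RS}^\tau(\gamma_z)=0$, gives $\delta(u_0)=0$, so $u_0$ is embedded; and $u_0\bullet u_0 = 0$ by \ref{F2}, \ref{F3}, \ref{F4} (the fiber $\gamma_z$ has minimal contact action, so the hidden-double-point and covering contributions vanish).

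Next I would set up the component $\mathcal{M}_{X,1}^J(\Sigma)_\star$ as the connected component of $u_0$ (with its marked point) inside $\mathcal{M}_{X,1}^J(\Sigma)$. Since $\mathrm{ind}\, u_0 = 2c_\tau(u_0^*TX) = 2 > -2 + \#\Gamma_0(u_0) + 2Z(du_0) = -2$, Wendl's automatic transversality \cite{Wen2} applies to every curve in a neighborhood, so the unmarked moduli space near $u_0$ is a smooth $2$-manifold and the $1$-marked space $\mathcal{M}_{X,1}^J(\Sigma)_\star$ is a smooth $4$-manifold; I would show transversality propagates over the whole component (any curve homotopic to $u_0$ shares its index and genus), so $\mathcal{M}_{X,1}^J(\Sigma)_\star$ is a smooth $4$-manifold globally. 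For the evaluation map $\mathrm{ev}\colon \mathcal{M}_{X,1}^J(\Sigma)_\star \to \widehat{X}$: injectivity of $\mathrm{ev}$ on each fiber follows from positivity of intersections, since two curves $u,v$ in the same component satisfy $u\bullet v = u_0\bullet u_0 = 0$, hence are disjoint (or equal) by \ref{F1}, so at most one curve of the family passes through any given point; this also shows the underlying unmarked curves are pairwise disjoint, i.e. the family is a genuine foliation. That $\mathrm{ev}$ is a local diffeomorphism then follows because the $2$-dimensional family of disjoint embedded planes sweeps out an open set and the derivative of $\mathrm{ev}$ has no kernel (a kernel vector would mean an infinitesimal deformation fixing a point, contradicting either embeddedness or the disjointness just established, via the intersection-theoretic dimension count).

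Properness of $\mathrm{ev}$ is where the main work lies: given a sequence of points $x_k \to x_\infty$ in $\widehat{X}$ with curves $u_k \in \mathcal{M}_{X,1}^J(\Sigma)_\star$ through $x_k$, I must show a subsequence converges in $\mathcal{M}_{X,1}^J(\Sigma)_\star$ rather than degenerating. Here I would again invoke SFT/Morse--Bott compactness: the energy is uniformly bounded (it depends only on the asymptotic fiber class and the fixed symplectic data, as in Lemma \ref{lem10} and the Remark after it), so a limit building exists; the aspherical assumption kills bubbles, and the degree/energy constraints force the limit to again be a single embedded plane asymptotic to some fiber, in the same homotopy class, hence in $\mathcal{M}_{X,1}^J(\Sigma)_\star$ — the point being that a broken configuration with a nontrivial lower level in $\mathbb{R}\times Y$ would have a component of strictly smaller positive multiplicity, impossible since $\deg = 0$ forces $M = N$ and $e_+$/the generic fiber is simple. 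Thus $\mathrm{ev}$ is a proper local diffeomorphism between connected manifolds of the same dimension, hence a covering map; since it is injective on each fiber it is a $1$-sheeted cover, i.e. a diffeomorphism. The hardest technical point is verifying that no lower-level breaking can occur in the Morse--Bott limit — this requires carefully combining the degree identity $M=N$, the minimality of the contact action of the fibers, and the index additivity to exclude any nonconstant curve in the symplectization level, so that the entire family $\mathcal{M}_{X,1}^J(\Sigma)_\star$ is both compact-up-to-the-obvious-reparametrization and stays within a single component.
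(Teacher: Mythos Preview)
Your proposal is correct and follows essentially the same approach as the paper: both obtain a seed plane by taking the Morse--Bott limit of $u_\varepsilon$ as $\varepsilon\to 0$, use the intersection computation $u\bullet v=0$ (via $Q_\tau=0$ and the asymptotic terms) to get embeddedness and pairwise disjointness, use automatic transversality and the local foliation structure for the local-diffeomorphism property, and use SFT compactness together with the degree and minimal-action constraints to rule out breaking. The only cosmetic difference is that the paper concludes by showing the image of $ev$ is open and closed (hence surjective) and then that $ev$ is a bijective local diffeomorphism, whereas you package the same closedness argument as properness and finish with ``proper local diffeomorphism $\Rightarrow$ covering map, injective $\Rightarrow$ degree one''; the underlying analysis is identical.
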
  
\begin{proof}
Fix a connected component of  $\mathcal{M}_{X, 1}^J(\Sigma)_{\star}$ such that the holomorphic planes are homotopic to $u_{\varepsilon} \#u_z$ in Remark \ref{remark1}. It suffices to show that $ev$ is a bijective local diffeomorphism.    

Let $u,v \in \mathcal{M}_{X, 1}^J(\Sigma)_{\star}$ be two distinct holomorphic planes. They are simple because the asymptotic ends are simple Reeb orbits. Let $\gamma_z$ be a fiber. Under the   trivialization $\tau$, the asymptotic operator  of $\gamma_z$ is $\mathbf{A}_{\gamma_z} = -J_0 \partial_t$.  Therefore, $\mu_{\tau}(\gamma_z \mp \epsilon) =\pm 1$, where $\epsilon>0$.  $v$ is homotopic to $u$ implies that they have the same relative homology class. By definition and Remark \ref{remark1}, we have 
\begin{equation*}
\begin{split}
&u \bullet u =Q_{\tau}(u) -\Omega_{+}^{\tau}(\gamma_{z}-\epsilon, \gamma_{z}-\epsilon) = \alpha^{\tau}_-(\gamma_{z}-\epsilon)=0,\\
&u \bullet v=Q_{\tau}(u, v) -\Omega_{+}^{\tau}(\gamma_{z}-\epsilon, \gamma_{z'}-\epsilon) =- \min\{ -\alpha^{\tau}_-(\gamma_{z}-\epsilon),  -\alpha^{\tau}_-(\gamma_{z'}-\epsilon)\}=0. 
\end{split}
\end{equation*} 
$u \bullet  v=0$ implies that $u$ and $v$ are disjoint. By the adjunction formula  \cite{Wen2}, we have 
\begin{equation*}
\begin{split}
0=u \bullet  
u = &2\delta_{total}(u)+ \frac{1}{2}( ind u -2 +2g(u) + \# \Gamma_0(u))  + cov_{\infty}(\gamma) + cov_{MB}(\gamma)\\
=& 2\delta_{total}(u) + \frac{1}{2}( ind u -2 +2g(u) + \# \Gamma_0(u)) =2 \delta_{total}(u). 
\end{split}
\end{equation*} 
Here $cov_{\infty}(\gamma) = cov_{MB}(\gamma)=0$ follows from \ref{F4}. 
Therefore, the holomorphic plane is embedded. 
Since  the  holomorphic planes  in  $\mathcal{M}_{X, 1}^J(\Sigma)_{\star}$ are embedded and pairwise  disjoint, $ev$ must be injective. 
 
To see $ev$ is surjective, we show that its image is open and closed.  Let $ u \in  \mathcal{M}_{X, 1}^J(\Sigma)_{\star}$.  By Theorem 4.5.42 of  \cite{Wen1}, there is  small   neighborhood of $u$ is foliated by the index 2 holomorphic planes. More precisely, we have an embedding 
\begin{equation} \label{eq1}
\begin{split}
G: &B_{\delta}^2(0) \times \mathbb{C} \to \widehat{X}\\
&(w,z) \to \Psi_u(\eta_w(z)),
\end{split}
\end{equation}
where $\eta_w$ is a family of sections of the normal bundle  $N_u$ parametrized by a small disk.  Here $\Psi_u$ is a  trivialization of the normal bundle such that   $\Psi_u =exp_u$ away from the  end. 
 Therefore, the image of $ev$ is open. Moreover, $G$ is embedding implies that $ev$ is a local diffeomorphism.

 We claim that the image of $ev$ is closed.  
Let $\{(u_n, z_n)\}_{n=1}^{\infty} \subset \mathcal{M}_{X, 1}^J(\Sigma)_{\star}$  be a sequence of holomorphic planes  such that $ev((u_n, z_n)) = u_n(z_n) =p_n$. Suppose that  $\lim_{n \to \infty } p_n = p \in \widehat{X}$.   We want to show that $p$  also lies  inside the image of $ev$.

Since $\int u_n^* \check{\omega} =A_{\omega}(\gamma_{z_n})$ is  independent of $n$, by Gromov compactness \cite{BEHWZ}, the sequence $\{u_n\}_{n=1}^{\infty} $ converges to     a   holomorphic building  $\mathbf{u}=\{u_0, ..., u_N\}$.   Because $\gamma_{z_n}$ have  minimal   contact action, the ends of  $\{u_n\}_{n=1}^{\infty} $ cannot move to    multiple covered Reeb orbits. 
The top level $u_N$ is asymptotic to $\gamma_z$ for some $z \in \Sigma$.  By the degree reason, $u_N$ has a nonempty negative end $\gamma'$.  The energy of $u_N$ is 
$$\int u_N^* d\lambda = \mathcal{A}_{\lambda}(\gamma_z) -  \mathcal{A}_{\lambda}(\gamma') \ge 0. $$
Because $\gamma_z$ has the  minimal contact action,    we must have  $\int u_N^* d\lambda  =0$.  Hence, $u_N$ is  the trivial cylinder.  

Note that the marked point  cannot lie inside the cylindrical levels  because $ev((\mathbf{u}, z_{\infty})) =p \in \widehat{X}$.  Therefore, $u_N$ is a trivial cylinder without mark point which is ruled out the by stability condition. 
 By induction,   $\mathbf{u}$ only consists of the cobordism level. No bubbles can appear due to the symplectically aspherical assumption \ref{T2}. Thus, $(\mathbf{u}, z_{\infty})$ is a holomorphic plane $(u_0, z_{\infty}) \in \mathcal{M}_{X, 1}^J(\Sigma)_{\star} $. Moreover, $p=ev((\mathbf{u}, z_{\infty}))=u_0(z_{\infty})$.  

 In sum, the image of $ev$ is either the empty set or $\widehat{X}$.  By Lemma \ref{lem5}, there is a holomorphic plane  $u_{\varepsilon} \in \mathcal{M}_X^{J_{\varepsilon}}(e_+; \emptyset)_{\star}\langle p \rangle$   for each $0<\varepsilon < \varepsilon_L$ (a prior, the component $\star$ may depend on $\varepsilon$ ).  The Morse-Bott compactness (Section 4.2.2) in \cite{BO} can be adapted to our cobordism setting. As  $\varepsilon \to 0$,  $u_{\varepsilon}$ converges to a Morse-Bott  holomorphic  building $\mathbf{u}$   such that $ev(\mathbf{u}) =p$ and $\mathbf{u}$ is asymptotic to the  Reeb orbit over $p_+$.  As $\gamma_{p_+}$ has minimal symplectic/contact action, by the same argument as before, each  positive level only consists of  a trivial cylinder  over $\gamma_{z}$ together with Morse flow lines.  Also, there is no bubbles because of \ref{T2}. Therefore, the main level $u_0$ of $\mathbf{u}$ is a holomorphic plane in  $\mathcal{M}_{X, 1}^J(\Sigma)$ such that $ev(u_0)=p$.  
In sum, $ev$  maps  $\mathcal{M}_{X, 1}^J(\Sigma)_{\star}$  to the whole $\widehat{X}$.
 \end{proof}

Let $\mathcal{M}_{X}^J(\Sigma)_*$  be the  connected component  by forgetting the marked point of  $\mathcal{M}_{X, 1}^J(\Sigma)_{\star}$.  In particular, it is nonempty. 
\begin{lemma}
The moduli space $\mathcal{M}_{X}^J(\Sigma)_{\star}$ is a   compact  orientable   manifold of dimension two.   In other words, $\mathcal{M}_{X}^J(\Sigma)_{\star}$ is a closed connected orientable  surface. 
\end{lemma}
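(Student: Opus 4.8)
The plan is to upgrade the bijective local diffeomorphism $ev\colon \mathcal{M}_{X,1}^J(\Sigma)_\star \to \widehat{X}$ from the previous lemma into structural information about the unmarked moduli space $\mathcal{M}_X^J(\Sigma)_\star$. First I would establish that $\mathcal{M}_X^J(\Sigma)_\star$ is a smooth manifold of dimension $2$: each plane $u$ in it is embedded (shown in the proof of Lemma \ref{lem6}), and by Wendl's automatic transversality (the criterion $\mathrm{ind}\, u = 2 > -2 + 2g(u) + \#\Gamma_0(u) + 2Z(du)$, here $= -2 + 0 + 0 + 0$) every such plane is Fredholm regular without any genericity assumption; hence the moduli space is a manifold of the expected dimension $\mathrm{ind}\, u = 2$. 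Orientability I would get from the coherent orientations on moduli spaces of holomorphic curves in the SFT setting (or, more directly, from the fact that $\mathcal{M}_{X,1}^J(\Sigma)_\star$ is diffeomorphic via $ev$ to $\widehat{X}$, which is orientable, and the forgetful map is a fibration with connected fibers, so the base is orientable too).

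Next I would prove compactness. The key inputs are already in place: the curves in $\mathcal{M}_X^J(\Sigma)_\star$ all carry the same relative homology class (they lie in one connected component), so $\int u^*\check\omega = A_\omega(\gamma_z)$ is the fixed minimal value; by SFT/Gromov compactness a sequence converges to a holomorphic building, and the argument in Lemma \ref{lem6} — minimality of the contact action of the fibers forces all non-main levels to be trivial cylinders, which are excluded by stability, and \ref{T2} excludes bubbling — shows the limit is again a single plane in $\mathcal{M}_X^J(\Sigma)_\star$ (the limiting fiber $\gamma_z$ may differ, but it still lies in $\Sigma$, so the building stays in the same component). Therefore $\mathcal{M}_X^J(\Sigma)_\star = \overline{\mathcal{M}_X^J(\Sigma)_\star}$ is compact.

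A compact orientable $2$-manifold without boundary is a closed orientable surface, which is the assertion. The one point needing care — and the main obstacle — is ruling out a boundary: I must check that the compactified moduli space has no codimension-one strata, i.e. no broken configurations contribute to $\partial\, \overline{\mathcal{M}_X^J(\Sigma)_\star}$. This again follows from the minimal-action argument: any degeneration would have to produce a nontrivial curve in $\mathbb{R}\times Y$ with positive end on a fiber, but such a curve would need a negative end of strictly smaller action, which is impossible since the fibers realize the minimal contact action; combined with the degree identity $M = N$ for rational curves in $\mathbb{R}\times Y$ (from the Nelson–Weiler degree computation, $\deg u = (M-N)/|e| = 0$), the only possibility is a trivial cylinder, excluded by stability. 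Hence $\overline{\mathcal{M}_X^J(\Sigma)_\star}$ is a closed surface, and connectedness was part of its definition, so it is a closed connected orientable surface.
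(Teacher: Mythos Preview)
Your proposal is correct and follows essentially the same line as the paper: compactness via the minimal-action plus degree argument (forcing upper levels to be trivial cylinders, excluded by stability) together with \ref{T2} to rule out bubbles, and the manifold structure via embeddedness plus Wendl's automatic transversality criterion $2=\mathrm{ind}\,u>-2+\#\Gamma_0(u)+2Z(du)=-2$. The only slight difference is in the orientability step: the paper simply invokes Lemma~7.2 of \cite{BO}, whereas you also offer the alternative of pulling orientability back through the forgetful fibration from $\mathcal{M}_{X,1}^J(\Sigma)_\star\cong\widehat{X}$; both are fine.
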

\begin{proof}
By the same energy and degree  reasons  as the proof of Lemma \ref{lem6}, the   positive levels of the broken holomorphic curve $\mathbf{u} \in \overline{\mathcal{M}_{X}^J}(\Sigma)_{\star}$ must be trivial cylinders, which  are ruled out by the  stability condition. Again by \ref{T2}, no bubbles exist.  Therefore,   $\mathcal{M}_{X}^J(\Sigma)_{\star}$ is compact.  

For $u  \in \mathcal{M}_{X}^J(\Sigma)_{\star}$, we know that $u$ is embedded from the proof of Lemma \ref{lem6}. Moreover, we have $2 =ind u>  -2 + \#\Gamma_0(u) +2Z(du) =-2$. The transversality  follows from Wendl's automatic transversality theorem \cite{Wen2}.  By Lemma 7.2 of \cite{BO}, $\mathcal{M}_{X}^J(\Sigma)$ is orientible. 
\end{proof}

Define  a surjective map  $\mathfrak{F}:  \mathcal{M}_{X, 1}^J(\Sigma)_{\star} \to  \mathcal{M}_{X}^J(\Sigma)_{\star}$ by forgetting the marked  point. The map $\mathfrak{F}$ is smooth by definition.   Define a map by 
\begin{equation*}
\begin{split}
\Pi:   \widehat{X} \to  \mathcal{M}_{X}^J(\Sigma)_{\star}, \ \ \ x \to \mathfrak{F} \circ ev^{-1}(x). 
\end{split}
\end{equation*}
We complete the proof of Theorem \ref{thm0} by the following lemma. It shows that $\widehat{X}$ is isomorphic to the associated vector bundle $E$. In particular, $X$ is diffeomorphic to the associated disk bundle $\mathbb{D}E$. 
\begin{lemma}
   $\Pi: \widehat{X} \to \mathcal{M}_{X}^J(\Sigma)_{\star}$ is a fiber bundle with fiber $\mathbb{R}^2$. Moreover,  $\widehat{X}$ is isomorphic (as a fiber  bundle with structure group $Diff(\mathbb{R}^2)$) to a  line  bundle $\pi_E : E \to  \Sigma$ with Euler number $e$. 
\end{lemma}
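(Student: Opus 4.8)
The plan is to show that the map $\Pi$ is a submersion with fibers diffeomorphic to $\mathbb{R}^2$, and then to identify the resulting bundle structure with a complex line bundle over $\Sigma$. First I would establish that $\Pi$ is well-defined: by Lemma \ref{lem6} the evaluation map $ev: \mathcal{M}_{X,1}^J(\Sigma)_{\star} \to \widehat{X}$ is a diffeomorphism, so $ev^{-1}$ is smooth, and $\mathfrak{F}$ is smooth, hence $\Pi = \mathfrak{F}\circ ev^{-1}$ is smooth. The fiber $\Pi^{-1}(u_0)$ over a curve $u_0 \in \mathcal{M}_X^J(\Sigma)_{\star}$ is exactly $ev(\mathfrak{F}^{-1}(u_0))$, i.e. the image of the single holomorphic plane $u_0$ together with its marked point ranging over the domain $\mathbb{C}$; since $u_0$ is an embedded plane asymptotic to a simple fiber $\gamma_z$, its image is a properly embedded copy of $\mathbb{R}^2$ in $\widehat X$. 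So each fiber of $\Pi$ is an embedded $\mathbb{R}^2$, and by the disjointness established in Lemma \ref{lem6} (the planes in $\mathcal{M}_X^J(\Sigma)_{\star}$ are pairwise disjoint) the fibers partition $\widehat X$.

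Next I would verify that $\Pi$ is a locally trivial fibration. The key input is the local foliation statement (\ref{eq1}) from the proof of Lemma \ref{lem6}: near any point of $\widehat X$, the embedding $G: B^2_\delta(0)\times\mathbb{C}\to\widehat X$, $(w,z)\mapsto\Psi_u(\eta_w(z))$, gives precisely a local trivialization of $\Pi$, with $B^2_\delta(0)$ parametrizing a neighborhood in $\mathcal{M}_X^J(\Sigma)_{\star}$ and $\mathbb{C}\cong\mathbb{R}^2$ the fiber coordinate. Compatibility of these local charts on overlaps is automatic because both the moduli space $\mathcal{M}_X^J(\Sigma)_{\star}$ and the foliation are intrinsically defined, so the transition maps lie in $\mathrm{Diff}(\mathbb{R}^2)$. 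Thus $\Pi:\widehat X\to\mathcal{M}_X^J(\Sigma)_{\star}$ is a smooth $\mathbb{R}^2$-bundle over a closed connected orientable surface.

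It remains to identify this bundle with a complex line bundle and to compute its Euler number. Here I would argue that each fiber carries a natural orientation: the holomorphic plane $u_0$ is $J$-holomorphic, hence its image is naturally oriented, and these orientations vary continuously, so $\Pi$ is an oriented $\mathbb{R}^2$-bundle, which is classified up to isomorphism by its Euler class, i.e. by an integer $d \in \mathbb{Z}\cong H^2(\mathcal{M}_X^J(\Sigma)_{\star};\mathbb{Z})$ (after identifying the base surface's orientation class). Any oriented rank-2 real bundle admits a compatible complex structure, giving a complex line bundle $E\to\Sigma$ (identifying $\mathcal{M}_X^J(\Sigma)_{\star}$ with $\Sigma$ via the asymptotic-fiber map $u_0\mapsto z$, which one checks is a diffeomorphism by the degree-one and compactness arguments already used). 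To pin down $d = e$: restrict $\Pi$ to the boundary $Y = \partial X$. The intersection of a fiber of $\Pi$ with $Y$ is the Reeb orbit $\gamma_z$ at infinity, so $\Pi|_Y$ recovers the original circle bundle $\pi: Y\to\Sigma$, whose Euler number is $e$ by hypothesis; equivalently, one reads off $d$ from the self-intersection computation in the proof of Lemma \ref{lem5}, where the class $A$ satisfies $A\cdot A = e$. Since a line bundle restricted to its unit circle subbundle has Euler number equal to its own Euler number, $d = e$, so $\widehat X \cong E$ with Euler number $e$, and $X$ is diffeomorphic to the associated disk bundle $\mathbb{D}E$.

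**Main obstacle.** The step I expect to require the most care is establishing that $\Pi$ is a genuine locally trivial fibration — in particular that the evaluation map is a global diffeomorphism and that the local foliation charts (\ref{eq1}) glue coherently over all of $\widehat X$, including near the cylindrical end where the planes become asymptotic to the Reeb orbits $\gamma_z$. The compactness argument in Lemma \ref{lem6} already handles the potential bubbling or breaking, but checking that the bundle projection extends smoothly over the end, so that $\Pi|_Y$ is literally the circle bundle $\pi$, is where the identification of the Euler number is made and where one must be most attentive.
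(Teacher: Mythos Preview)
Your argument that $\Pi$ is a locally trivial $\mathbb{R}^2$-bundle is essentially the paper's: the local foliation chart $G$ of (\ref{eq1}) provides the trivialization, and the reduction to a line bundle (you use the $J$-orientation; the paper cites Stewart \cite{ES}) is fine either way.

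The gaps are in the second half. First, your identification of the base $\mathcal{M}_X^J(\Sigma)_\star$ with $\Sigma$ via the asymptotic map $u_0\mapsto z$ is not justified by ``the degree-one and compactness arguments already used.'' Those arguments show $ev$ is a diffeomorphism, not that the asymptotic map is. Even granting it is a local diffeomorphism (which needs an argument about the normal-bundle deformations at the Morse--Bott end), it could a priori be a nontrivial covering, and you have no independent handle on the genus of $\mathcal{M}_X^J(\Sigma)_\star$ to rule that out. The paper instead argues homologically: since $\widehat X\simeq E$ retracts to the zero section, $H_1(X,\mathbb{R})\cong H_1(\mathcal{M}_X^J(\Sigma)_\star,\mathbb{R})$, and the long exact sequence together with $H_1(Y,\mathbb{R})\cong\mathbb{R}^{2g}$ forces the base to have genus $g$.

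Second, your Euler-number argument ``$\Pi|_Y$ recovers $\pi$'' does not work as stated. The fiber of $\Pi$ through a point is the image of a holomorphic plane asymptotic to $\gamma_z$ \emph{at infinity} in the cylindrical end; its intersection with the slice $Y=\partial X\subset\widehat X$ is not $\gamma_z$, and there is no a priori identification of $Y$ with the unit circle bundle of $E$ under the diffeomorphism $\widehat X\cong E$. You gesture at the alternative $A\cdot A=e$ from Lemma \ref{lem5}, but do not explain why $A$ is (up to sign) the zero-section class. The paper fills this in: once $H_2(X,\mathbb{Z})\cong\mathbb{Z}\langle[S_0]\rangle$, write $A=k[S_0]$; then $u\cdot S_0=(Z+A)\cdot S_0/|e|=-1/k$ must be an integer, forcing $k=\pm1$, whence the Euler number is $[S_0]\cdot[S_0]=A\cdot A=e$.
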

\begin{proof}
The local trivialization of  of $\Pi: \widehat{X} \to  \mathcal{M}_{X}^J(\Sigma)_{\star}$  is given by the local foliation (\ref{eq1}).  By definition,  we have the following diagram:  
$$\begin{CD}
				 B_{\delta}^2(0) \times \mathbb{C} @>G >>  \widehat{X} \\
				@VV pr  V @VV  \Pi=\mathfrak{F} \circ ev^{-1} V\\
				 B_{\delta}^2(0)    @> g >>   \mathcal{M}_{X}^J(\Sigma)_{\star}
			\end{CD}$$
Here $g$ maps $w \in  B_{\delta}^2(0) $ to  $\Psi_u(\eta_w) \in \mathcal{M}_{X}^J(\Sigma)_{\star}$.  Therefore, $\Pi: \widehat{X} \to \mathcal{M}_{X}^J(\Sigma)_{\star}$ is a fiber bundle with fiber $\mathbb{R}^2$.
By the Corollary.I of \cite{ES},  $\widehat{X}$ is isomorphic  to a  line bundle $\pi_E : E \to  \mathcal{M}_{X}^J(\Sigma)_{\star}$.     In particular, $\widehat{X}$  is diffeomorphic  to the total space $E$.

Since $\widehat{X}$  is diffeomorphic  to the total space $E$, we have 
\begin{equation*}
H_*(X, \mathbb{Z}) \cong  H_* (E, \mathbb{Z}) \cong H_*(\mathcal{M}_{X}^J(\Sigma)_{\star}, \mathbb{Z})  \mbox{ and } H^*(X, \mathbb{Z}) \cong  H^*(E, \mathbb{Z}) \cong H^*(\mathcal{M}_{X}^J(\Sigma)_{\star}, \mathbb{Z}).  
\end{equation*}
The above isomorphisms are  also true for $\mathbb{R}$ coefficient.   By the exact sequence for relative homology and the above isomorphism, we have 
$$  \mathbb{R}^{2g} \cong H_1(Y, \mathbb{R}) \cong H_1(X, \mathbb{R}) \cong  H_1(\mathcal{M}_{X}^J(\Sigma)_{\star}, \mathbb{R}). $$  
The above isomorphism $H_1(Y, \mathbb{R}) \cong \mathbb{R}^{2g} $ follows from  Lemma 3.7 of \cite{NW}. Therefore, $\mathcal{M}_{X}^J(\Sigma)_{\star}$ is diffeomorphic to  $\Sigma$. 

Reintroduce the class $A$ in Lemma \ref{lem5}   such that $|e| [u]=Z+A$, where $u \in \mathcal{M}^{J_{\varepsilon}}_X(e_+; \emptyset)_{\star}\langle p \rangle$. The class $A$ satisfies 
$A \cdot A =e$ and $Z\cdot A=0$. We write  $A=  k [\mathcal{M}_{X}^J(\Sigma)_{\star}]$ for some $k \in \mathbb{Z}$.  
Let   $S_0\cong \mathcal{M}_{X}^J(\Sigma)_{\star}$   denote the zero section of $E$. 
Note that $u \cdot S_0 =(Z+A) \cdot S_0/|e| =-1/k$ is an integer. Therefore, $k=\pm 1$. The Euler number of $E$ is $$<c_1(E),  [S_0]> =  [S_0] \cdot [S_0] =A \cdot A =e. $$   

In fact, we must have $k=-1$.  By the computations in Lemma \ref{lem5} and Lemma 3.12 of \cite{NW},  we have  $ <c_1(TX), A> =|e|c_{\tau}(u^*TX)-c_{\tau}(Z)= |e|-\chi(\Sigma).$
  The line bundle $E$ admits a symplectic form $\Omega$  such that  $S_0$ is an embedded symplectic surface with $\int_{S_0} \Omega>0$. By the adjunction formula, we have 
\begin{equation*}
\chi(\Sigma)= \chi(S_0) = \frac{1}{k}<c_1(TX), A> -\frac{1}{k^2}A\cdot A = \frac{1}{k}(-\chi(\Sigma) +|e|) +\frac{1}{k^2}|e|.
\end{equation*}
The above equation implies that $k=-1$.  

 
\end{proof}

Shenzhen University

\href{mailto: ghchen@szu.edu.cn} {\tt ghchen@szu.edu.cn} \\

\end{document}